\def\@tocline#1#2#3#4#5#6#7{\relax
  \ifnum #1>\c@tocdepth 
  \else
    \par \addpenalty\@secpenalty\addvspace{#2}%
    \begingroup \hyphenpenalty\@M
    \@ifempty{#4}{%
      \@tempdima\csname r@tocindent\number#1\endcsname\relax
    }{%
      \@tempdima#4\relax
    }%
    \ifnum#1=1\large\fi
    \ifnum#1=2\small\fi
    \parindent\z@ \leftskip#3\relax \advance\leftskip\@tempdima\relax
    \rightskip\@pnumwidth plus4em \parfillskip-\@pnumwidth
    #5\leavevmode\hskip-\@tempdima #6\nobreak\relax
    \hfil\hbox to\@pnumwidth{\@tocpagenum{#7}}\par
    \nobreak
    \endgroup
  \fi}
\newtheorem{thm}{Theorem}[section]
\newtheorem{lem}[thm]{Lemma}
\newtheorem{prop}[thm]{Proposition}
\theoremstyle{definition}
\newtheorem{rem}[thm]{Remark}
\newtheorem{defn}[thm]{Definition}
\newtheorem{ex}[thm]{Example}
\theoremstyle{remark}
\numberwithin{equation}{section}
\def\R{{\mathbb R}}
\def\Z{{\mathbb Z}}
\def\C{{\mathbb C}}
\def\M{{\mathbb M}}
\def\Hom{\text{\rm Hom}}
\def\Aut{\text{\rm Aut}}
\title[\tiny{Pointed harmonic volume and its relation to the extended Johnson homomorphism}]
{Pointed harmonic volume and its relation to the extended Johnson homomorphism}
\author[Yuuki Tadokoro]{Yuuki Tadokoro}
\address{Department of Mathematics,
National Institute of Technology, Kisarazu College, 2-11-1 Kiyomidai-Higashi,
Kisarazu, Chiba 292-0041, Japan}
\email{tado\char`\@nebula.n.kisarazu.ac.jp}
\begin{document}

\begin{abstract}
The period for a compact Riemann surface, defined by the integral of differential 1-forms, is a classical complex analytic invariant, strongly related to the complex structure of the surface.
In this paper, we treat another complex analytic invariant called the pointed harmonic volume.
As a natural extension of the period defined using Chen's iterated integrals, it captures more detailed information of the complex structure.
It is also one of a few explicitly computable examples of complex analytic invariants. We obtain its new value for a certain pointed hyperelliptic curve.
An application of the pointed harmonic volume is presented.
We explain the relationship between the harmonic volume and first extended Johnson homomorphism on the mapping class group of a pointed oriented closed surface.
\end{abstract}

\maketitle

\section{Introduction}
Let $C$ be a compact Riemann surface of genus $g\geq 2$ and $P_0\in C$ a point.
The period integral on $C$ is defined by the integral of the differential 1-form along a loop in $C$ with base point $P_0$.
It has been studied in the fields of algebraic geometry, complex analysis, differential topology, and mathematical physics.
Many variants of the period integral have been generated.
In this paper, we confine our attention to the pointed harmonic volume for $(C,P_0)$ using Chen's \cite{0389.58001} iterated integrals.
The iterated integral, defined in the 1970s, is a kind of period integral that can be considered a differential form on the fundamental group of smooth manifolds as used in de Rham homotopy theory.
We remark that the iterated integral of length two is called a quadratic period by Gunning \cite{0211.10502}.
Harris \cite{0527.30032, 1063.14010} defined the {\it harmonic volume} for $C$ by means of Chen's iterated integrals.
Moreover, he gave the definition of the pointed harmonic volume for $(C,P_0)$.
But, the name {\it pointed harmonic volume} was given by Pulte \cite{0678.14005}.
The harmonic volume $I_{C}$ is a restriction of the pointed harmonic volume $I_{(C,P_0)}$.
Let $\M_g$ denote the moduli space of compact Riemann surfaces of genus $g$.
The harmonic volume can be regarded as a real analytic section of a local system on the Torelli space of compact Riemann surfaces of genus $g$.
Here, the Torelli space is $\M_g$ with a first integral homology marking.
We can interpret the harmonic volume as the volume of a 3-chain in the torus $\R^3/{\Z^3}$.
It gives the first Griffiths Abel--Jacobi maps for the Jacobian variety $J(X)$.
As its application,
Harris \cite{0523.14006, 1063.14010}, Faucette \cite{0764.14015}, Tadokoro \cite{1222.14058, 1184.14018, zbMATH06562004}, and Otsubo \cite{1236.14009}
proved that the Ceresa cycle in $J(X)$ for some special $X$ is algebraically nontrivial.
As an aside, Hain \cite{0654.14006} and Pulte \cite{0678.14005}, by means of the pointed harmonic volume, obtained a pointed Torelli theorem stating that the truncation of the fundamental group ring $\Z \pi_1(C,P_0)$ determines the complex structure of the pointed compact Riemann surface $(C,P_0)$.
The harmonic volume is one of a few explicitly computable complex analytic invariants and gives a quantitative study of the local structure of the moduli space $\M_g$.
See \cite{zbMATH06608090} for the (pointed) harmonic volume and its applications.

Harmonic volumes were computed by Harris \cite{0527.30032, 0523.14006}, Faucette \cite{0764.14015, 0783.14003}, Tadokoro \cite{1090.14007, 1133.14030, 1222.14058, 1184.14018, zbMATH06562004}, and Otsubo \cite{1236.14009} for special (pointed) compact Riemann surfaces.
Harris \cite{0527.30032} and Tadokoro \cite{1090.14007, 1133.14030} obtained the (pointed) harmonic volume for the (Weierstrass pointed) hyperelliptic curves.
Its values are $0$ and $1/{2}$ modulo $\Z$ by the existence of the hyperelliptic involution.
As far as we know, the (pointed) harmonic volume only takes values $0$, $1/{2}$, and certain mysterious ones obtained by the special values of the generalized hypergeometric function ${}_{3}F_{2}$. Other values are unknown.
In this paper, we obtain new rational values of the pointed harmonic volume for a certain pointed hyperelliptic curve.
Its geometrical meaning is suggested by the order of the biholomorphisms for the pointed hyperelliptic curve.

We introduce the extended Johnson homomorphism and Johnson map from a topological viewpoint.
Let $\Gamma_{g,\ast}$ be the mapping class group of an oriented closed surface $\Sigma_g$ of genus $g\geq 2$ with a marked point.
The basic geometric feature is that the natural action of $\Gamma_{g,\ast}$ yields the rational cohomology equivalence between the pointed moduli space and the classifying space of $\Gamma_{g,\ast}$.
The kernel of the natural action of $\Gamma_{g,\ast}$ on $H_1=H_1(\Sigma_g; \Z)$ preserving the intersection pairing is called the Torelli group $\mathscr{I}_{g,\ast}$.
Johnson \cite{zbMATH03636938} defined the classical Johnson homomorphism $\tau_1\colon \mathscr{I}_{g,\ast}\to \Hom(H_1,\wedge^2 H_1)$ via the action of the lower central series of the fundamental group $\pi_1(\Sigma_g,\ast)$ and higher $\tau_k$.
Morita \cite{zbMATH00179521} defined the extended Johnson homomorphism $\tau_1$ as a crossed homomorphism and higher $\tau_k$ of the whole mapping class group $\Gamma_{g,\ast}$.
We remark that Hain \cite{MR1431828} proved the existence of $\tau_k$ on $\Gamma_{g,\ast}$.
They gave a kind of linear approximation for $\mathscr{I}_{g,\ast}$ and $\Gamma_{g,\ast}$.
Kitano \cite{MR1381688} gave a description of the classical Johnson homomorphism $\tau_k$ using the Magnus expansion derived from Fox's free differential calculus.
Moreover, Perron \cite{MR2111022} constructed an extension of $\tau_k$.
Kawazumi defined a generalized Magnus expansion of the free group satisfying the minimum conditions for describing $\tau_k$.
He gave a natural extension of $\tau_k$ obtained from this expansion and called it the Johnson map.

We comment on the study of the moduli space $\M_g$ from an analytic viewpoint.
Madsen--Weiss \cite{zbMATH05214865} showed that $H^{\ast}(\M_g; \R)$ in the stable range is generated by Mumford--Morita--Miller(MMM) classes $e_m=(-1)^{m}\kappa_m \in H^{2m}(\M_g; \R)$.
Wolpert \cite{zbMATH03957620} gave a canonical differential 2-form representing $e_1$ on $\M_g$ using the Weil--Petersson K\"{a}hler form.
Kawazumi \cite{math/0603158, 1170.30001} defined the harmonic Magnus expansion as an extension of the period and pointed harmonic volume in terms of the generalized Magnus expansions.
This harmonic expansion gives another canonical differential 2-form on $\M_g$ representing $e_1$ and its higher relations.
The first variation of the pointed harmonic volume $I_{(C,P_0)}$ can be regarded as an analytic counterpart of $e_1$.
In this paper, we describe the precise relationship between the pointed harmonic volume $I_{(C,P_0)}$ and a restriction of the extended Johnson homomorphism $\tau_1: \Gamma_{g,\ast}\to \Hom(H_1^{\otimes 3},\Z)$.
Moreover, we compute a certain extended Johnson homomorphism $\tau_1$ obtained from a standard Magnus expansion.
This is a new explicit example.

\noindent
{\bf Acknowledgements.}
The author wishes to express his gratitude to Nariya Kawazumi for valuable comments about harmonic Magnus expansion including the main theorem and stimulating conversations.
He is greatly indebted to Takuya Sakasai for advice about a numerical calculation program and useful comments.
He also thanks Masaru Kamata, Toshihiro Nakanishi, Kokoro Tanaka, and the referee for useful comments.
Part of the work was also done while the author stayed at the Danish National Research Foundation Centre of Excellence, QGM (Centre for Quantum Geometry of Moduli Spaces) in Aarhus University.
He is very grateful for the warm hospitality of QGM.
This work was supported by JSPS KAKENHI Grant Numbers JP25800053 and JP17K05234.

\setcounter{tocdepth}{2}
\tableofcontents

\section{Pointed harmonic volume}
\subsection{Pointed harmonic volume}
Let $C$ be a compact Riemann surface of genus $g\geq 2$ and $P_0\in C$ a point.
The surface $C$ is homeomorphic to an oriented closed surface $\Sigma_g$.
Its mapping class group, denoted by $\Gamma_g$, is the group of isotopy classes of orientation-preserving diffeomorphisms of $\Sigma_g$.
The group $\Gamma_g$ acts naturally on the first integral homology group
$H_1(C; \Z)=H_1(\Sigma_g; \Z)$.
Let $H$ denote the first integral cohomology group $H^1(C; \Z)$.
By the Poincar\'e duality, $H$ is isomorphic to $H_1(C; \Z)$ as $\Gamma_g$-modules.
The Hodge star operator $\ast$ is locally given by
$\ast (f_1(z)dz + f_2(z)d\bar{z})=-\sqrt{-1}\,f_1(z)dz + \sqrt{-1}\,f_2(z)d\bar{z}$ in a local coordinate $z$.
It depends only on the complex structure and not on the choice of a Hermitian metric.
The real Hodge star operator $\ast\colon \Omega^1(C)\to \Omega^1(C)$ is given by restriction.
Using the Hodge theorem, we identify $H$ with the space of real harmonic 1-forms on $C$ with $\Z$-periods, {\it i.e.},
$H=\{\omega\in \Omega^1(C);\, d\omega=d\ast \omega=0,
\ \int_{\gamma}\omega\in \Z \text{ for any loop }\gamma\}.$
Harris \cite{0527.30032} and Pulte \cite{0678.14005} gave the definition of the pointed harmonic volume for $(C,P_0)$ in the following way.
Let $K$ be the kernel of the intersection pairing $(\ , \ ) :H\otimes H \to \Z$.
\begin{defn}
For a given $\left(\sum_{i=1}^{n}a_i\otimes b_i\right)\otimes c\in K\otimes H$,
we define the homomorphism $I_{(C,P_0)}\colon K\otimes H\to \R/{\Z}$ by
\[I_{(C,P_0)}{\Biggl(}{\biggl(}\sum_{i=1}^{n}a_{i}\otimes b_{i}{\biggr)}\otimes c{\Biggr)}=\sum_{i=1}^{n}\int_{c}a_i b_i +\int_{c}\eta
\quad \mathrm{mod} \ \mathbb{Z}.\]
Here $c$ is a loop with base point $P_0$, and there exists an $\eta\in \Omega^1(C)$ satisfying conditions $d\eta +\sum_{i=1}^{n} a_i\wedge b_i =0$ and $\int_{C}\eta\wedge \ast\alpha =0$ for any closed 1-form $\alpha \in \Omega^1(C)$.
The second condition determines $\eta$ uniquely.
The integral $\displaystyle \int_{\gamma}a_ib_i$ is Chen's iterated integral \cite{0389.58001}, that is, $\displaystyle \int_{\gamma}a_ib_i =\int_{0\leq t_1\leq t_2\leq 1}f_i(t_1)g_i(t_2)dt_1dt_2$ for $\gamma^{\ast}a_i=f_i(t)dt$ and $\gamma^{\ast}b_i=g_i(t)dt$. Here $t$ is the coordinate in the unit interval $[0,1]$.
\end{defn}
The pointed harmonic volume is a complex analytic invariant defined by the complex structure of $(C,P_0)$.
See the introduction of Pulte \cite{0678.14005} for a statement of the pointed Torelli theorem.
Let $(H^{\otimes 3})^{\prime}$ be the kernel of the natural homomorphism $p: H^{\otimes 3}
\ni a\otimes b\otimes c\mapsto ((a,b)c,(b,c)a,(c,a)b)\in H^{\oplus 3}$ induced by the intersection pairing on $H$.
It is a subgroup of $K\otimes H$.
The {\it harmonic volume} $I_C$ \cite{0527.30032} for $C$
is a restriction of the pointed harmonic volume $I_{(C,P_0)}$:
\[I_{C}=I_{(C,P_0)}|_{(H^{\otimes 3})^{\prime}}\colon
(H^{\otimes 3})^{\prime}\to \R/{\Z}.\]
It depends only on the complex structure of $C$ and not on the choice of base point.

A natural action of $\Gamma_g$ on $H$ induces its diagonal action on $\Hom_{\Z}(K\otimes H, \R/{\Z})$.
Let $\Aut (C,P_0)$ denote the group of biholomorphisms of $C$ fixing base point $P_0$.
By construction, $I_{(C,P_0)}$ is $\Aut (C,P_0)$-invariant.
It can be regarded as a real analytic section of a local system on the pointed moduli space obtained by the $\Aut (C,P_0)$-module $\Hom_{\Z}(K\otimes H, \R/{\Z})$.
Using the hyperelliptic involution, the harmonic volume $2I_{(C,P_0)}$ for hyperelliptic curves with Weierstrass base point $P_0$ is trivial.
Nonetheless, the zero locus of the pointed harmonic volume for nonhyperelliptic curves is unknown.

\subsection{A hyperelliptic curve $C_n$ and its first integral homology group}
\label{hyperelliptic curve}
Let $C_n$ denote the hyperelliptic curve of genus $g\geq 2$ defined by the affine equation $w^2=z^n-1$ in the complex plane $(z,w)$ for $n=2g+1$ or $2g+2$.
Set $P_0=(0,\sqrt{-1})\in C_n$.
It admits the hyperelliptic involution given by $\iota(z,w)=(z,-w)$.
To compute the pointed harmonic volume for $(C_n, P_0)$, we need to obtain the Poincar\'e dual of the holomorphic 1-forms on $C_n$.

For $k=0,1,\ldots, n-1$, set $Q_k=(\zeta^k,0)$, where $\zeta=\zeta_n$ is a root of unity $\exp(2\pi\sqrt{-1}/{n})$.
A biholomorphism $\varphi$ of $C_n$ is defined by $\varphi(z,w)=(\zeta z, w)$.
It fixes the base point $P_0$.
Moreover, $\varphi$ generates the cyclic subgroup of $\Aut (C_n,P_0)$ with order $n$, which is denoted by $G\cong \langle\varphi\rangle$.
A path $\gamma_k$ from $P_0$ to $\iota(P_0)$ via $Q_k$ is defined by
\[
\gamma_k(t)=\left\{
  \begin{array}{lll}
  \left( 2t\zeta^j,\sqrt{-1}\sqrt{1-(2t)^{n}} \right)& \text{for} &0\leq t\leq 1/{2}, \\[7pt]
  \left( (2-2t)\zeta^j,-\sqrt{-1}\sqrt{1-(2-2t)^{n}} \right)& \text{for} & 1/{2}\leq t\leq 1.
  \end{array}
  \right.
\]
Write the loop $\ell_k=\gamma_k\cdot \gamma_{k+1}^{-1}$ with base point $P_0$, where the product $\gamma_k\cdot \gamma_{k+1}^{-1}$ indicates that we traverse $\gamma_k$ first, then $\gamma_{k+1}^{-1}$; see Mumford \cite{1112.14003} and references therein for the loops of generic hyperelliptic curves.
We immediately obtain that the set of homology classes $\{[\ell_{k}]\}_{k=0,1,\ldots, n-1}$ generates first integral homology group $H_{1}(C_n; \Z)$, $\varphi(\ell_k)=\ell_{k+1}$, and $\prod_{k=0}^{n-1}\ell_k=1$ as an element of $\pi_1(C_n,P_0)$.
Furthermore, $\prod_{k=0}^{g}\ell_{2k}=\prod_{k=0}^{g}\ell_{2k+1}=1$ and $\prod_{k=0}^{g}\ell_{2k}\prod_{k=1}^{g}\ell_{2k-1}=1$ only for $n=2g+2$ and $n=2g+1$, respectively; here subscripted numbers are read modulo $n$, \textit{i.e.}, $\ell_{n}=\ell_{0}$.
By abuse of notation, we write $\ell_{k}$ for the homology class $[\ell_{k}]$ throughout this section.
It is easy to obtain the intersection number:
\[
(\ell_i, \ell_j)
=\left\{\begin{array}{cl}
 1 & \text{if } j-i=1,\\
-1 & \text{if } j-i=-1,\\
 0 & \text{otherwise}.
\end{array}\right.
\]

Let $H^{1,0}$ be the space of holomorphic 1-forms on $C_n$. It is a complex vector space of dimension $g$ with basis given by $\{\omega^{\prime}_i=z^{i-1}dz/{w}\}_{i=1,2,\ldots,g}$.
Set $B(u,v)$ the beta function $\int_{0}^{1}t^{u-1}(1-t)^{v-1}dt$.
The period is computed by
\[
\int_{\ell_{k}}\omega^{\prime}_i=\dfrac{2B\left(i/{n},1/{2}\right)}{n\sqrt{-1}}\zeta^{ik}(1-\zeta^{i}).
\]
Therefore, we denote $\omega_i=\dfrac{n\sqrt{-1}}{2B\left(i/{n},1/{2}\right)} \omega^{\prime}_i$.
We then have $\int_{\ell_k}\omega_i=\zeta^{ik}(1-\zeta^{i})$.
Let $\chi_j\in H_1(C_n; \C)$ be $\sum_{k=0}^{n-1}\zeta^{jk}\ell_k$.
It is easy to show $\ell_i=\dfrac{1}{n}\sum_{j=1}^{n-1}\zeta^{-ij}\chi_j$ and $\varphi(\chi_j)=\zeta^{-j}\chi_j$. The latter equation says that under the action $\varphi$ of $H_1(C_n; \C)$, $\chi_j$ is the eigenvector associated with eigenvalue $\zeta^{-j}$.
We obtain the Poincar\'e dual of $H^{1,0}$.
\begin{lem}
Let $\mathop{\mathrm{P.D.}}\colon H^{1}(C_n; \C)\to H_{1}(C_n; \C)$ be the Poincar\'e dual. Then we have
\[
\mathop{\mathrm{P.D.}}(\omega_i)=\dfrac{1}{1+\zeta^{-i}}\chi_i,
\]
for $i=1,2,\ldots,g$.
\end{lem}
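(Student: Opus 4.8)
The plan is to identify $\mathrm{P.D.}(\omega_i)$ through the defining property of the Poincar\'e dual: it is the unique class $h\in H_1(C_n;\C)$ such that $(h,\gamma)=\int_{\gamma}\omega_i$ for every $\gamma\in H_1(C_n;\C)$, uniqueness being guaranteed by the non-degeneracy of the intersection pairing on the closed surface $C_n$. Since $\{\ell_k\}_{k=0}^{n-1}$ generates $H_1(C_n;\C)$, it suffices to verify this identity on the loops $\ell_k$, where by the normalization recorded just before the statement the right-hand side is $\int_{\ell_k}\omega_i=\zeta^{ik}(1-\zeta^{i})$. Thus the whole lemma reduces to the numerical check that $\tfrac{1}{1+\zeta^{-i}}\chi_i$ pairs with $\ell_k$ to give exactly this value; note first that $1+\zeta^{-i}\ne 0$ for $1\le i\le g$, obviously when $n=2g+1$ and because $i\le g<n/2$ when $n=2g+2$, so the candidate class is well defined.

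The key step is the intersection number $(\chi_i,\ell_k)$. Expanding $\chi_i=\sum_{m=0}^{n-1}\zeta^{im}\ell_m$ bilinearly and using the intersection numbers $(\ell_m,\ell_k)$ with subscripts read modulo $n$ — this cyclic reading is forced, since only then does the pairing descend consistently to $H_1(C_n;\C)$ given the homology relation $\sum_{k}\ell_k=0$ — only the terms with $m=k-1$ (contributing $+\zeta^{i(k-1)}$) and $m=k+1$ (contributing $-\zeta^{i(k+1)}$) survive, so that
\[
(\chi_i,\ell_k)=\zeta^{i(k-1)}-\zeta^{i(k+1)}=\zeta^{ik}\bigl(\zeta^{-i}-\zeta^{i}\bigr).
\]
Now the elementary factorization $\zeta^{-i}-\zeta^{i}=\zeta^{-i}(1-\zeta^{i})(1+\zeta^{i})$ together with $\zeta^{-i}(1+\zeta^{i})=1+\zeta^{-i}$ gives $\bigl(\tfrac{1}{1+\zeta^{-i}}\chi_i,\ell_k\bigr)=\zeta^{ik}(1-\zeta^{i})=\int_{\ell_k}\omega_i$ for every $k$, and the uniqueness invoked in the first paragraph then yields $\mathrm{P.D.}(\omega_i)=\tfrac{1}{1+\zeta^{-i}}\chi_i$.

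The argument is short, so the only genuine hazards are bookkeeping rather than depth: one must keep the chain of loops cyclic when evaluating the intersection form, so that the endpoint cases $k=0$ and $k=n-1$ behave like the generic one, and one must fix the sign of $\mathrm{P.D.}$ so that $(\mathrm{P.D.}(\omega),\gamma)=\int_{\gamma}\omega$, the opposite ordering flipping the overall sign. If one prefers not to produce the candidate class out of thin air, the same conclusion follows by equivariance of Poincar\'e duality under the biholomorphism $\varphi$: since $\varphi^{\ast}\omega_i=\zeta^{i}\omega_i$ and the $\zeta^{-i}$-eigenspace of $\varphi$ acting on $H_1(C_n;\C)$ is exactly $\C\chi_i$, the class $\mathrm{P.D.}(\omega_i)$ must be a scalar multiple of $\chi_i$, after which the computation above pins the scalar to $\tfrac{1}{1+\zeta^{-i}}$.
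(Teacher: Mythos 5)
Your proof is correct. The difference from the paper is organizational rather than substantive: the paper first uses $\varphi$-equivariance of Poincar\'e duality to conclude that $\mathop{\mathrm{P.D.}}(\omega_i)$ is a scalar multiple $\lambda_i\chi_i$ of the eigenvector $\chi_i$, and then determines $\lambda_i$ from the single pairing $(\chi_i,\ell_0)=\zeta^{(n-1)i}-\zeta^{i}$; you instead verify the defining property of the Poincar\'e dual against the whole generating set $\{\ell_k\}$ and invoke non-degeneracy of the intersection form for uniqueness, relegating the equivariance argument to a closing remark. The computations coincide (your $(\chi_i,\ell_k)=\zeta^{ik}(\zeta^{-i}-\zeta^{i})$ specializes at $k=0$ to the paper's identity), and the same factorization $\zeta^{-i}-\zeta^{i}=(1-\zeta^{i})(1+\zeta^{-i})$ produces the scalar. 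Your write-up has the minor merit of making explicit two points the paper leaves tacit: that the intersection numbers $(\ell_m,\ell_k)$ must be read cyclically modulo $n$ (so $(\ell_{n-1},\ell_0)=1$, which the paper silently uses and which is forced by $\sum_k\ell_k=0$), and that $1+\zeta^{-i}\neq 0$ for $1\le i\le g$ so the stated class is well defined.
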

We remark $\mathop{\mathrm{P.D.}}(\overline{\omega_i})=\dfrac{1}{1+\zeta^{i}}\chi_{n-i}$.
\begin{proof}
Clearly, the equivariant action of the mapping class group $\Gamma_g$ gives
\[
\varphi_{\ast}(\mathop{\mathrm{P.D.}}(\omega_i))
=\mathop{\mathrm{P.D.}}((\varphi^{-1})^{\ast}\omega_i)
=\zeta^{-i}\mathop{\mathrm{P.D.}}(\omega_i).
\]
We see that $\mathop{\mathrm{P.D.}}(\omega_i)$ can be denoted by $\lambda_i \chi_i$, where $\lambda_i$ is a constant complex number.
It remains to prove $\lambda_i=1/(1+\zeta^{-i})$.
We have only to compute
\begin{align*}
\int_{\ell_0}\omega_i
&=(\mathop{\mathrm{P.D.}}(\omega_i),\ell_0)\\
&=\lambda_i(\chi_i,\ell_0)\\
&=(\zeta^{(n-1)i}-\zeta^{i})\lambda_i.
\end{align*}
This equation and $\int_{\ell_0}\omega_i=1-\zeta^{i}$ complete the proof.
\end{proof}

\subsection{Pointed harmonic volume for $C_n$}
\label{Pointed harmonic volume for $C_n$}
To get the pointed harmonic volume for $(C_n,P_0)$,
we need to compute two iterated integrals on $C_n$ with base point $P_0$.
\begin{lem}\label{quadratic period}
For $1\leq i,j\leq g$ and $0\leq k\leq n-1$, we have
\[
\int_{\ell_k}\omega_i\omega_j
=\dfrac{1}{2}\zeta^{(i+j)k}(1-2\zeta^{j}+\zeta^{i+j}).
\]
\end{lem}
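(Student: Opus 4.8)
The plan is to split the loop $\ell_k=\gamma_k\cdot\gamma_{k+1}^{-1}$ and reduce the computation to integrals over the single arc $\gamma_k$, using the standard behaviour of Chen's iterated integrals of length one and two under concatenation and reversal of paths. For composable paths one has $\int_{\alpha\beta}\omega_i\omega_j=\int_{\alpha}\omega_i\omega_j+\bigl(\int_{\alpha}\omega_i\bigr)\bigl(\int_{\beta}\omega_j\bigr)+\int_{\beta}\omega_i\omega_j$, and for a reversed path $\int_{\beta^{-1}}\omega_j=-\int_{\beta}\omega_j$ and $\int_{\beta^{-1}}\omega_i\omega_j=\int_{\beta}\omega_j\omega_i$ (the length-two reversal sign $(-1)^2$ being trivial). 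Applying these to $\ell_k$ gives
\[
\int_{\ell_k}\omega_i\omega_j
=\int_{\gamma_k}\omega_i\omega_j
-\Bigl(\int_{\gamma_k}\omega_i\Bigr)\Bigl(\int_{\gamma_{k+1}}\omega_j\Bigr)
+\int_{\gamma_{k+1}}\omega_j\omega_i ,
\]
so it suffices to evaluate $\int_{\gamma_m}\omega_i$ and $\int_{\gamma_m}\omega_i\omega_j$ for an arbitrary index $m$.

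For the length-one integral I would use the equivariance $\varphi\circ\gamma_m=\gamma_{m+1}$, immediate from the parametrization, together with $\varphi^{\ast}\omega_i=\zeta^{i}\omega_i$: this gives $\int_{\gamma_{m+1}}\omega_i=\zeta^{i}\int_{\gamma_m}\omega_i$, hence $\int_{\gamma_m}\omega_i=\zeta^{im}\int_{\gamma_0}\omega_i$, and then comparing $\int_{\ell_m}\omega_i=\int_{\gamma_m}\omega_i-\int_{\gamma_{m+1}}\omega_i=\zeta^{im}(1-\zeta^i)\int_{\gamma_0}\omega_i$ with the period recorded in Subsection~\ref{hyperelliptic curve} forces $\int_{\gamma_0}\omega_i=1$, so $\int_{\gamma_m}\omega_i=\zeta^{im}$. (Equivalently, the same beta-integral that underlies the period formula shows $\int_{\gamma_m}\omega_i'=\tfrac{2B(i/n,1/2)}{n\sqrt{-1}}\zeta^{im}$ directly.)

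For the length-two integral over $\gamma_m$ the key observation is the symmetry $\iota\circ\gamma_m=\gamma_m^{-1}$, which one checks at once from the explicit piecewise formula: the hyperelliptic involution exchanges the two sheets, and $\gamma_m$ runs from $P_0$ to the branch point $Q_m$ on one sheet and back to $\iota(P_0)$ on the other over the same path in the $z$-plane. Since $\iota^{\ast}\omega_i=-\omega_i$, this yields $\int_{\gamma_m^{-1}}\omega_i\omega_j=\int_{\iota\circ\gamma_m}\omega_i\omega_j=\int_{\gamma_m}(-\omega_i)(-\omega_j)=\int_{\gamma_m}\omega_i\omega_j$, while the reversal formula gives $\int_{\gamma_m^{-1}}\omega_i\omega_j=\int_{\gamma_m}\omega_j\omega_i$; hence $\int_{\gamma_m}\omega_i\omega_j=\int_{\gamma_m}\omega_j\omega_i$. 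The shuffle relation $\int_{\gamma_m}\omega_i\omega_j+\int_{\gamma_m}\omega_j\omega_i=\bigl(\int_{\gamma_m}\omega_i\bigr)\bigl(\int_{\gamma_m}\omega_j\bigr)=\zeta^{(i+j)m}$ then gives $\int_{\gamma_m}\omega_i\omega_j=\tfrac12\zeta^{(i+j)m}$.

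Substituting $\int_{\gamma_k}\omega_i=\zeta^{ik}$, $\int_{\gamma_{k+1}}\omega_j=\zeta^{j(k+1)}$, $\int_{\gamma_k}\omega_i\omega_j=\tfrac12\zeta^{(i+j)k}$ and $\int_{\gamma_{k+1}}\omega_j\omega_i=\tfrac12\zeta^{(i+j)(k+1)}$ into the displayed identity and factoring out $\zeta^{(i+j)k}$ produces $\tfrac12\zeta^{(i+j)k}\bigl(1-2\zeta^{j}+\zeta^{i+j}\bigr)$, the asserted value. The step demanding the most care is the orientation-sign bookkeeping in the concatenation and reversal formulas for iterated integrals, together with verifying the geometric identities $\varphi\circ\gamma_m=\gamma_{m+1}$ and $\iota\circ\gamma_m=\gamma_m^{-1}$ from the piecewise definition of $\gamma_m$. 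Anyone wishing to bypass the symmetry argument can instead compute $\int_{\gamma_m}\omega_i\omega_j=\int_0^1 F_i\,dF_j$ with $F_i(t)=\int_0^t\gamma_m^{\ast}\omega_i$: after the substitutions $u=2t$ on $[0,\tfrac12]$ and $u=2-2t$ on $[\tfrac12,1]$ the two halves combine, after a short cancellation, into $2G_i(1)G_j(1)$ times a normalizing constant, where $G_i(u)=\int_0^u v^{i-1}(1-v^n)^{-1/2}\,dv$, again giving $\tfrac12\zeta^{(i+j)m}$.
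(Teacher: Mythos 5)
Your proof is correct and follows essentially the same route as the paper: the concatenation formula for $\ell_k=\gamma_k\cdot\gamma_{k+1}^{-1}$ combined with the hyperelliptic involution and the reversal/shuffle identities for iterated integrals. If anything, yours is the more carefully placed version of the argument, since you apply the symmetry $\iota\circ\gamma_m=\gamma_m^{-1}$ to the arcs $\gamma_m$ (where it holds exactly as parametrized paths) to get $\int_{\gamma_m}\omega_i\omega_j=\tfrac12\zeta^{(i+j)m}$, which is what actually produces the asymmetric factor $1-2\zeta^{j}+\zeta^{i+j}$; the paper's displayed intermediate identity $\int_{\ell_k}\omega_i\omega_j=\tfrac12\int_{\ell_k}\omega_i\int_{\ell_k}\omega_j$, read literally for the loop $\ell_k$, would instead give the symmetric quantity $\tfrac12\zeta^{(i+j)k}(1-\zeta^{i})(1-\zeta^{j})$.
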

\begin{proof}
For any connected path $c_1\cdot c_2$,
we recall the well-known property of iterated integrals
\begin{equation}\label{fundamental property}
\int_{c_1\cdot c_2}\omega_i\omega_j
=\int_{c_1}\omega_i\omega_j+\int_{c_2}\omega_i\omega_j
+\int_{c_1}\omega_i\int_{c_2}\omega_j.
\end{equation}
This equation and $\iota(\ell_k)=\ell_k^{-1}$ yield
\[\int_{\ell_k}\omega_i\omega_j=\int_{\ell_k^{-1}}\omega_i\omega_j=\dfrac{1}{2}\int_{\ell_k}\omega_i\int_{\ell_k}\omega_j.\]
See also \cite[p.806]{1090.14007}.
Applying the above property to the path $\ell_k=\gamma_k\cdot \gamma_{k+1}^{-1}$ proves the lemma.
\end{proof}
Let $t_u$ denote $\sum_{i=1}^{n-1}\zeta^{iu}$.
We immediately obtain
\[
t_u=
\left\{
  \begin{array}{lll}
  n-1 & \text{if} &u\in n\Z\\
  -1 & \text{if} &u\in \Z\setminus n\Z
  \end{array}
  \right.
\]
The above lemma implies the formula.
\begin{prop}\label{iterated integrals on a hyperelliptic curve}
For each $1\leq i,j,k\leq n-1$, set $s_{a,b}=t_{k-i+a}t_{k-j+b}+t_{k-i-b}t_{k-j-a}$. We then have
\[
\int_{\ell_k}\ell_i\ell_j
=\dfrac{1}{2n^2}(s_{1,0}+s_{1,1}-s_{0,1}-s_{-1,1}).
\]
\end{prop}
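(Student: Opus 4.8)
The plan is to expand the iterated integral $\int_{\ell_k}\ell_i\ell_j$ into the eigenbasis $\{\chi_u\}$ using the linear relation $\ell_i=\frac1n\sum_{u=1}^{n-1}\zeta^{-iu}\chi_u$, reduce everything to the quantities $\int_{\ell_k}\omega_u\omega_v$ and $\int_{\ell_k}\omega_u$ computed in Lemma~\ref{quadratic period} and in Section~\ref{hyperelliptic curve}, and then collect terms. The iterated integral is bilinear in its two arguments, so $\int_{\ell_k}\ell_i\ell_j=\frac1{n^2}\sum_{u,v=1}^{n-1}\zeta^{-iu}\zeta^{-jv}\int_{\ell_k}\chi_u\chi_v$, and it remains to evaluate $\int_{\ell_k}\chi_u\chi_v$. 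Here $\chi_u$ is a $\C$-homology class, not literally a $1$-form, so the first step is to interpret $\int_{\ell_k}\chi_u\chi_v$ via Poincar\'e duality: by Lemma~2.1 the harmonic $1$-form dual to $\chi_u$ is a combination of $\omega_u$ and $\overline{\omega_{n-u}}$ (using $\mathrm{P.D.}(\omega_u)=\frac{1}{1+\zeta^{-u}}\chi_u$ and $\mathrm{P.D.}(\overline{\omega_u})=\frac{1}{1+\zeta^{u}}\chi_{n-u}$), so each $\chi_u$ is expressed as a $\C$-linear combination of the harmonic forms whose iterated integrals along $\ell_k$ we already know.

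Next I would assemble the pieces. Using Lemma~\ref{quadratic period} together with the values $\int_{\ell_k}\omega_i=\zeta^{ik}(1-\zeta^i)$ and the conjugate period $\int_{\ell_k}\overline{\omega_i}=\overline{\zeta^{ik}(1-\zeta^i)}$, one gets a closed expression for $\int_{\ell_k}\chi_u\chi_v$; crucially it will again be of the form $\tfrac12(\text{something})$, because the $\iota$-symmetry argument in the proof of Lemma~\ref{quadratic period} (that $\int_{\ell_k}ab=\tfrac12\int_{\ell_k}a\int_{\ell_k}b$ for harmonic $1$-forms $a,b$) applies verbatim to any harmonic form, hence to $\chi_u$ and $\chi_v$. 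So $\int_{\ell_k}\chi_u\chi_v=\tfrac12\bigl(\int_{\ell_k}\chi_u\bigr)\bigl(\int_{\ell_k}\chi_v\bigr)$, and $\int_{\ell_k}\chi_u=\sum_{m=0}^{n-1}\zeta^{um}(\ell_m,\ell_k)=\zeta^{u(k-1)}-\zeta^{u(k+1)}$ from the intersection numbers. Substituting back,
\[
\int_{\ell_k}\ell_i\ell_j=\frac{1}{2n^2}\sum_{u,v=1}^{n-1}\zeta^{-iu}\zeta^{-jv}\bigl(\zeta^{u(k-1)}-\zeta^{u(k+1)}\bigr)\bigl(\zeta^{v(k-1)}-\zeta^{v(k+1)}\bigr).
\]
Expanding the product gives four double sums, each of which factors as a product of two single sums of the form $\sum_{u=1}^{n-1}\zeta^{u(k-i\pm1)}=t_{k-i\pm1}$ (and likewise in $v$), which is exactly how the symbols $t_u$ and $s_{a,b}$ in the statement arise.

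The final step is bookkeeping: match the four resulting terms with $s_{1,1}$, $s_{1,-1}$, $s_{-1,1}$, $s_{-1,-1}$ and then use the elementary identity $t_u=t_{-u}$ (clear from $\zeta^{iu}+\zeta^{i(n-u)}$ summing symmetrically, or just from $t_u\in\{n-1,-1\}$ depending only on $u\bmod n$) to rewrite $s_{-1,-1}=t_{k-i-1}t_{k-j-1}+t_{k-i+1}t_{k-j+1}$ in terms of the indices appearing in $s_{1,1}$ and $s_{0,1}$, $s_{-1,1}$ as defined in the proposition; reindexing $a\leftrightarrow b$ inside a single $s_{a,b}$ is free since $t_{k-i+a}t_{k-j+b}+t_{k-i-b}t_{k-j-a}$ is built to be symmetric under that swap combined with $i\leftrightarrow j$, but note the $i,j$ in the proposition are fixed, so care is needed to land precisely on the combination $s_{1,0}+s_{1,1}-s_{0,1}-s_{-1,1}$. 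I expect this last collation — tracking which of the $\pm1$ offsets in $u$ pairs with which in $v$, and folding four terms into the asymmetric-looking four-term answer via $t_u=t_{-u}$ — to be the only real obstacle; the analytic input is entirely contained in Lemma~\ref{quadratic period} and Lemma~2.1, and everything after that is a finite Fourier computation on $\Z/n\Z$.
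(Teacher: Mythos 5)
Your overall strategy --- expand $\ell_i=\frac1n\sum_u\zeta^{-iu}\chi_u$, reduce to the iterated integrals of the eigenforms via Poincar\'e duality, and collect the resulting character sums into the $t_u$'s --- is exactly the paper's. But the one analytic step you supply is wrong, and it is fatal. You claim $\int_{\ell_k}\chi_u\chi_v=\tfrac12\int_{\ell_k}\chi_u\int_{\ell_k}\chi_v$ on the grounds that the $\iota$-symmetry argument ``applies verbatim'' to the loop $\ell_k$. It does not: $\iota$ carries $\ell_k=\gamma_k\cdot\gamma_{k+1}^{-1}$ to $\gamma_k^{-1}\cdot\gamma_{k+1}$, a loop based at $\iota(P_0)\neq P_0$, so you cannot conclude $\int_{\ell_k}ab=\int_{\ell_k}ba$ (length-two iterated integrals are not conjugation-invariant). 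The identity $\int ab=\tfrac12\int a\int b$ holds on the arcs $\gamma_k$, where $\iota\circ\gamma_k$ genuinely equals $\gamma_k^{-1}$ as a path; one must then apply the composition formula \eqref{fundamental property} to $\ell_k=\gamma_k\cdot\gamma_{k+1}^{-1}$, and the cross term $\int_{\gamma_k}\omega_i\int_{\gamma_{k+1}^{-1}}\omega_j$ is what produces the \emph{asymmetric} factor $1-2\zeta^j+\zeta^{i+j}$ of Lemma \ref{quadratic period}, rather than the symmetric $(1-\zeta^i)(1-\zeta^j)$ your identity would give. (The displayed line in the paper's proof of Lemma \ref{quadratic period} does assert your identity for $\ell_k$, but that display is inconsistent with the lemma's own conclusion; the stated conclusion, which is what the Proposition uses, is the one obtained from the arcs.)

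Concretely, your identity yields
\[
\int_{\ell_k}\ell_i\ell_j=\frac{1}{2n^2}\bigl(t_{k-i-1}-t_{k-i+1}\bigr)\bigl(t_{k-j-1}-t_{k-j+1}\bigr),
\]
which is symmetric in the offsets and contains no terms $t_{k-i}t_{*}$ or $t_{*}t_{k-j}$; no amount of using $t_u=t_{-u}$ will turn this into $\frac{1}{2n^2}(s_{1,0}+s_{1,1}-s_{0,1}-s_{-1,1})$, whose parts $s_{1,0}-s_{0,1}$ carry exactly those zero-offset factors. For $n=5$ and $(i,j,k)=(1,2,2)$ your expression gives $0$ while the Proposition gives $-3/10$. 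The fix is simply to use Lemma \ref{quadratic period} as stated (extended to all $1\le u,v\le n-1$ via the antiholomorphic forms), i.e.\ $\int_{\ell_k}\chi_u\chi_v=\tfrac12(1+\zeta^{-u})(1+\zeta^{-v})\zeta^{(u+v)k}(1-2\zeta^{v}+\zeta^{u+v})$; after that substitution the finite Fourier bookkeeping you describe does land on the claimed four-term combination.
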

\begin{proof}
We obtain directly
 \begin{align*}
  2n^2\int_{\ell_k}\ell_i\ell_j
&=2n^2\sum_{p=1}^{n}\sum_{q=1}^{n}\int_{\ell_{k}}\dfrac{1}{n}\zeta^{-ip}\chi_p\dfrac{1}{n}\zeta^{-jq}\chi_q\\
&=2\sum_{p,q}\zeta^{-ip-jq}\int_{\ell_{k}}\chi_p\chi_q\\
&=\sum_{p,q}\zeta^{-ip-jq}(1+\zeta^{-p})(1+\zeta^{-q})\zeta^{(p+q)k}(1-2\zeta^{q}+\zeta^{p+q})\\
&=\sum_{p}\zeta^{(k-i)p}(1+\zeta^{-p})\sum_{q}\zeta^{(k-j)q}(1+\zeta^{-q})(1-2\zeta^{q}+\zeta^{p+q})\\
&=\sum_{p}\zeta^{(k-i)p}(1+\zeta^{-p})(t_{k-j-1}+(-1+\zeta^p)t_{k-j}+(-2+\zeta^p)t_{k-j+1})\\
&=\{(t_{k-i-1}+t_{k-i})t_{k-j-1}+(-t_{k-i-1}+t_{k-i+1})t_{k-j}\\
&\hspace{10em} +(-2t_{k-i-1}+t_{k-i}+t_{k-i+1})t_{k-j+1}\}.
 \end{align*}
\end{proof}

From the above Proposition the following theorem is obtained in a straightforward manner.
\begin{thm}
Assume $j-i\geq 2$.
We determine all the values of the pointed harmonic volume $I_{(C_n,P_0)}$:
\[
\begin{array}{c|l|c}
 \omega & \text{conditions}& I_{(C_n,P_0)}(\omega) \\ \hline
 \ell_i\otimes \ell_j\otimes \ell_k & k\leq i-2& 0\\
                                    & k=i-1& (n-1)/{n}\\
                                    & k=i& 0\\
                                    & i+1=j-1=k& 1/{n}\\
                                    & j-i\geq 3 \text{ and }k=i+1 & 1/{n}\\
                                    & i+2\leq k\leq j-2 & 0\\
                                    & j-i\geq 3 \text{ and }k=j-1 & 1/{n}\\
                                    & k=j & 0\\
                                    & k=j+1 & (n-1)/{n}\\ \hline
 \ell_i\otimes \ell_i\otimes \ell_k & k=i\pm 1 & 1/{2}\\
                                    & \text{otherwise} & 0\\ \hline
 (\ell_{i}\otimes \ell_{i+1}+\ell_{i+1}\otimes \ell_{i})\otimes \ell_k & & 0\\ \hline 
 (\ell_{i}\otimes \ell_{i+1}-\ell_{i+1}\otimes \ell_{i+2})\otimes \ell_k 
                                    & k\leq i-2 & 0\\
                                    & k=i-1 & (n-1)/{n}\\
                                    & k=i & (n+4)/{2n}\\
                                    & k=i+1 & 0\\
                                    & k=i+2 & (n-4)/{2n}\\
                                    & k=i+3 & 1/{2n}\\
                                    & k\geq i+4 & 0
\end{array}.
\]
\end{thm}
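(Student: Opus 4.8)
The plan is to reduce the pointed harmonic volume to the length‑two iterated integrals of Proposition~\ref{iterated integrals on a hyperelliptic curve}, and then to read off the table by a finite case analysis.

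First I would dispose of the $\eta$‑term. Let $m=\sum_{p}a_{p}\otimes b_{p}\in K$ be written through the real harmonic forms Poincar\'e dual to the integral classes $\ell_{r}$, and let $\eta$ be the $1$‑form with $d\eta=-\sum_{p}a_{p}\wedge b_{p}$ singled out by $\int_{C}\eta\wedge\ast\alpha=0$ for all closed $\alpha$. Since $\iota_{\ast}=-\mathrm{id}$ on $H_{1}(C_{n};\Z)$ we have $\iota^{\ast}a_{p}=-a_{p}$ and $\iota^{\ast}b_{p}=-b_{p}$, hence $\iota^{\ast}\bigl(\sum_{p}a_{p}\wedge b_{p}\bigr)=\sum_{p}a_{p}\wedge b_{p}$; as $\iota$ is biholomorphic, $\iota^{\ast}$ commutes with $\ast$ and preserves the orthogonality condition, so by uniqueness $\iota^{\ast}\eta=\eta$. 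Because $\iota\circ\ell_{k}=\gamma_{k}^{-1}\cdot\gamma_{k+1}$ agrees with $\ell_{k}^{-1}$ up to a cyclic interchange of its two arcs — immaterial for integrating a $1$‑form over a loop — one gets $\int_{\ell_{k}}\eta=\int_{\ell_{k}}\iota^{\ast}\eta=\int_{\iota\circ\ell_{k}}\eta=-\int_{\ell_{k}}\eta$, so $\int_{\ell_{k}}\eta=0$. Thus $I_{(C_{n},P_{0})}(m\otimes\ell_{k})\equiv\sum_{p}\int_{\ell_{k}}a_{p}b_{p}\pmod{\Z}$, and everything reduces to iterated integrals.

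Next the structural rows. When $\sum_{p}a_{p}\otimes b_{p}$ is symmetric — this covers $\ell_{i}\otimes\ell_{i}$ and $\ell_{i}\otimes\ell_{i+1}+\ell_{i+1}\otimes\ell_{i}$ — the $2$‑form $\sum_{p}a_{p}\wedge b_{p}$ vanishes identically, so $\eta=0$, and by the shuffle relation \eqref{fundamental property} applied to the loop $\ell_{k}$ the value $\sum_{p}\int_{\ell_{k}}a_{p}b_{p}$ is a combination of products of periods, hence an integer, except for $\ell_{i}\otimes\ell_{i}\otimes\ell_{k}$ where it equals $\tfrac12\bigl(\int_{\ell_{k}}\widehat{\ell_{i}}\bigr)^{2}=\tfrac12(\ell_{i},\ell_{k})^{2}$, i.e.\ $\tfrac12$ for $k=i\pm1$ and $0$ otherwise. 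For the first and fourth blocks, bilinearity gives $I_{(C_{n},P_{0})}(\ell_{i}\otimes\ell_{j}\otimes\ell_{k})\equiv\int_{\ell_{k}}\ell_{i}\ell_{j}$ and $I_{(C_{n},P_{0})}\bigl((\ell_{i}\otimes\ell_{i+1}-\ell_{i+1}\otimes\ell_{i+2})\otimes\ell_{k}\bigr)\equiv\int_{\ell_{k}}\ell_{i}\ell_{i+1}-\int_{\ell_{k}}\ell_{i+1}\ell_{i+2}$ modulo $\Z$. Substituting Proposition~\ref{iterated integrals on a hyperelliptic curve} and using that $t_{u}=n-1$ when $n\mid u$ and $t_{u}=-1$ otherwise — so that each $s_{a,b}$ is a sum of two products drawn from $\{-1,\,n-1\}$ — it remains only to record, for each position of $k$ relative to $i,j$ (with $j-i\ge2$), which of the shifted indices $k-i,\ k-i\pm1,\ k-j,\ k-j\pm1$ are divisible by $n$, and then to reduce $\tfrac1{2n^{2}}(s_{1,0}+s_{1,1}-s_{0,1}-s_{-1,1})$ modulo $\Z$; this produces the tabulated values.

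The analytic input is short; the bulk of the work is the bookkeeping in the last step. The delicate spots are the boundary positions $k\in\{i-1,i,i+1,j-1,j,j+1\}$, where a shifted index becomes a multiple of $n$ and the large term $t_{0}=n-1$ enters, and the coincidence $i+1=j-1$ occurring when $j=i+2$, where two shifted indices can vanish at once — which is exactly why that case is listed separately. Away from these positions every $t_{u}$ equals $-1$, the iterated integral is an integer, and the value is $0$, so only finitely many genuinely different computations occur.
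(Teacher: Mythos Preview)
Your approach is correct and matches the paper's: the paper likewise disposes of the correction term by citing the hyperelliptic--involution argument (your direct proof is exactly the content of \cite[Lemma~4.2]{1090.14007}), reduces everything to the iterated integrals of Proposition~\ref{iterated integrals on a hyperelliptic curve}, and then does the finite case analysis, writing out only one representative computation. Your use of the shuffle identity to handle the two symmetric rows directly is a harmless shortcut over invoking Proposition~\ref{iterated integrals on a hyperelliptic curve}; note, though, that \eqref{fundamental property} is the concatenation formula rather than the shuffle relation itself (the latter follows from it), so you may want to adjust that cross-reference.
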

\begin{proof}
We need to compute the integral of the correction term $\eta$.
As in \cite[Lemma 4.2]{1090.14007}, $\int_{\ell_k}\eta =0$ for any $k$ can be obtained.
It suffices to obtain the iterated part of the integrals.
We have the value $I_{(C_n,P_0)}((\ell_{i}\otimes \ell_{i+1}-\ell_{i+1}\otimes \ell_{i+2})\otimes \ell_{i+2})$.
The other cases can be computed similarly.
Proposition \ref{iterated integrals on a hyperelliptic curve} gives
\begin{align*}
&\hspace*{1em}2n^2I_{(C_n,P_0)}((\ell_{i}\otimes \ell_{i+1}-\ell_{i+1}\otimes \ell_{i+2})\otimes \ell_{i+2})\\
&=2n^2\left(\int_{\ell_{i+2}}\ell_{i}\ell_{i+1}-\int_{\ell_{i+2}}\ell_{i+1}\ell_{i+2}\right)\\
&=-n-n-(2+2)-\{-n-n-(1+(n-1)^2-2(n-1))\}\\
&=n^2-4n.
\end{align*}
\end{proof}
\begin{rem}
For $n=2g+2$, put $a_i=\ell_{2i-1}$ and $b_i=(\ell_0\cdot \ell_2\cdot \cdots \cdot \ell_{2i-2})^{-1}$.
The set $\{a_i,b_i\}_{i=1,2,\ldots,g}\subset H_{1}(C_n; \Z)$ is a symplectic basis.
Similar results for the values of the pointed harmonic volume can be found in \cite[Lemma 4.3]{1090.14007}.
\end{rem}

\section{Extended Johnson homomorphism}
Let $\Sigma_g$ denote an oriented closed surface of genus $g\geq 2$, $p_0\in \Sigma_g$ a point, and $v_0\in T_{p_0}\Sigma_g\setminus \{0\}$ a non-zero tangent vector.
We denote by $\Gamma_{g,\ast}$ and $\Gamma_{g,1}$ the mapping class group of $\pi_0(\mathrm{Diff}^{+}(\Sigma_g,p_0))$ and $\pi_0(\mathrm{Diff}^{+}(\Sigma_g,p_0,v_0))$, respectively.
They are the groups of isotopy classes of orientation-preserving diffeomorphisms of $\Sigma_g$ fixing $p_0$ and $v_0$, respectively.
The group $\Gamma_{g,\ast}$ and $\Gamma_{g,1}$ naturally acts on $\pi_{1}(\Sigma_g,p_0)$ and the fundamental group with tangent vector $\pi_{1}(\Sigma_g,p_0,v_0)$.
Let $F_{n}$ denote a free group of rank $n$; we have the isomorphism $\pi_{1}(\Sigma_g,p_0,v_0)\cong F_{2g}$.
The Dehn--Nielsen theorem states that $\Gamma_{g,1}$ is a subgroup of $\mathrm{Aut}(F_{2g})$ throughout the above action.
We begin by introducing the generalized Magnus expansion of $F_n$ and Johnson map of $\mathrm{Aut}(F_{n})$.
This Johnson map includes the extended Johnson homomorphism $\tau_1$ as the one-dimensional cohomology class of $\Gamma_{g,\ast}$.
Next, we explain the harmonic Magnus expansion of $F_{2g}$ for a pointed compact Riemann surface $(C,P_0)$ with non-zero tangent vector $v$.
Finally, we show the connection between the pointed harmonic volume and a restriction of the extended Johnson homomorphism.
As an aside, we compute the value of $\tau_1$ obtained from a standard Magnus expansion for the hyperelliptic curve $C_n$ in Subsection \ref{Computation of the first extended Johnson homomorphism}.
This computation is still true if we drop the complex structure of $(C,P_0)$.
In other words, we need only the topological data: loops $\ell_i$'s and a diffeomorphism $\varphi$ for $(\Sigma_g, p_0)$ satisfying the relations in the former part of Subsection \ref{hyperelliptic curve}.

\subsection{Generalized Magnus expansion and total Johnson map}
We briefly sketch the generalized Magnus expansion and total Johnson map.
This is a natural extension of the Johnson homomorphisms of the Torelli group.
The best general reference here is Kawazumi and Kuno \cite{MR3497295}.

Let $F_n$ be a free group of rank $n\geq 2$.
Its first integral homology group is denoted by $H=H_1(F_n; \Z)={F_{n}}^{\mathrm{ab}}$ throughout this subsection.
For $\gamma\in F_n$, write its homology class $[\gamma]$.
The completed tensor algebra generated by $H$ is defined as
$\widehat{T}=\prod_{m=0}^{\infty}H^{\otimes m}$.
Put $\widehat{T}_p=\prod_{m\geq p}H^{\otimes m}$.
The algebra $\widehat{T}$ has a decreasing filtration of two-sided ideals $\{\widehat{T}_p\}_{p\geq 1}$.
The set $1+\widehat{T}_1$ becomes a subgroup of the multiplicative group of $\widehat{T}$.
A map $\theta\colon F_n\to \widehat{T}$ is a \textit{(generalized) Magnus expansion}, if $\theta$ is a group homomorphism of $F_n$ into $1+\widehat{T}_1$ and $\theta(\gamma)\equiv 1+[\gamma]$ (mod $\widehat{T}_2$) for any $\gamma\in F_n$.
Fix a generating system $\{x_1,x_2,\ldots,x_n\}$ of $F_n$.
For this system, we define the group homomorphism $\mathrm{std}: F_n\to 1+\widehat{T}_1$ by $\mathrm{std}(x_i)= 1+[x_i]$, $1\leq i\leq n$. We call it the original \textit{standard Magnus expansion} derived from Fox's free differential calculus.
For $F_{2g}\cong \pi_{1}(\Sigma_g,p_0,v_0)$, its standard Magnus expansion is defined using a fixed symplectic generating system $\{a_i,b_i\}_{i=1,2,\ldots,g}\subset \pi_{1}(\Sigma_g,p_0,v_0)$.
A Magnus expansion is determined only by the values of its generators of $F_n$.
We have many choices of Magnus expansions.
Let $\mathrm{Aut}(\widehat{T})$ be the set of filter-preserving algebra automorphisms of $\widehat{T}$, \textit{i.e.}, $U(\widehat{T}_m)=\widehat{T}_m$ for $U\in \mathrm{Aut}(\widehat{T})$ and $m\geq 2$.
The kernel of the natural action on the space $\widehat{T}_1/{\widehat{T}_2}=H$ is denoted by $\mathrm{IA}(\widehat{T})$.
By the restriction to the subspace $H\subset \widehat{T}$, we have the identity\[
\mathrm{IA}(\widehat{T})\cong \Hom(H,\widehat{T}_2)
=\prod_{k=1}^{\infty}\Hom(H,H^{\otimes (k+1)}).
\]
For each $\varphi\in \mathrm{Aut}(F_n)$ and Magnus expansion $\theta$, there is a unique $T^{\theta}(\varphi)\in \mathrm{Aut}(\widehat{T})$ satisfying the commutative diagram
\[
\xymatrix{
F_n \ar[r]^{\theta} \ar[d]^{\varphi} & \widehat{T}\ar[d]^{T^{\theta}(\varphi)}\\
F_n \ar[r]^{\theta} & \widehat{T}.
}
\]
Indeed, the map $T^{\theta}\colon \mathrm{Aut}(F_n)\ni \varphi\mapsto T^{\theta}(\varphi)\in\mathrm{Aut}(\widehat{T})$ is an injective group homomorphism.
Let $|\varphi|$ be the automorphism of $H$ induced by the action of $\varphi$ on $H$.
It gives $|\varphi|\in \mathrm{Aut}(\widehat{T})$ using the same letter.
Set $\tau^{\theta}(\varphi)\colon =T^{\theta}(\varphi)\circ |\varphi|^{-1}
\in\mathrm{Aut}(\widehat{T})$.
This map becomes an element of $\mathrm{IA}(\widehat{T})$.
The above identity for $\mathrm{IA}(\widehat{T})$ uniquely establishes
\[
\tau^{\theta}(\varphi)|_{H}=1_{H}+\sum_{k=1}^{\infty}\tau^{\theta}_{k}(\varphi)
\in \prod_{k=0}^{\infty}\Hom(H,H^{\otimes (k+1)}),
\]
where $\tau^{\theta}_{k}(\varphi)\in \Hom(H,H^{\otimes (k+1)})$.
The map $\tau^{\theta}_{k}: \mathrm{Aut}(F_n)\to \Hom(H,H^{\otimes (k+1)})$ is called the $k$-th Johnson map of $\varphi$ corresponding to $\theta$.
It is known that
\begin{equation}\label{tau_1}
\tau_1^{\theta}(\varphi)[\gamma]=\theta_2(\gamma)-|\varphi| \theta_2(\varphi^{-1}(\gamma))\in H^{\otimes 2}.
\end{equation}
Here, $\varphi \in \mathrm{Aut}(F_n)$, $\gamma\in F_n$, and $\theta_2: F_n\to H^{\otimes 2}$ is the second part of $\theta$.
By definition, we have
\begin{equation}\label{theta_2}
\theta_2(\gamma_1\cdot \gamma_2)
=\theta_2(\gamma_1)+\theta_2(\gamma_2)+[\gamma_1][\gamma_2]\in H^{\otimes 2}.
\end{equation}
for any $\gamma_1, \gamma_2\in F_n$.
This is similar to the formula \ref{fundamental property}.
The map $\tau^{\theta}_{k}$ is not a homomorphism but becomes a homomorphism if we take the appropriate restriction of $\mathrm{Aut}(F_n)$ using the lower central series of $F_n$.
We call it a Johnson homomorphism of $\mathrm{Aut}(F_n)$.
See Satoh \cite{zbMATH06604928} for the Johnson homomorphism of $\mathrm{Aut}(F_n)$.
All Johnson homomorphisms come from the homomorphism $T^{\theta}$, which is referred to as the total Johnson map.
We remark that Massuyeau \cite{MR2903772} has by other means obtained the total Johnson map.

For $\mathrm{Aut}(F_n)$-module $M$, $C^{\ast}(\mathrm{Aut}(F_n);M)$ denotes the normalized cochain complex of the group $\mathrm{Aut}(F_n)$ with values in $M$.
For any $\varphi\in \mathrm{Aut}(F_n)$ and Magnus expansion $\theta$, we have
\[
d\tau^{\theta}_{1}(\varphi)=0\in C^{2}(\mathrm{Aut}(F_n);\Hom(H,H^{\otimes 2})).
\]
Here, $d$ is the boundary operator.
In other words, $\tau^{\theta}_{1}(\varphi \psi)=\tau^{\theta}_{1}(\varphi)+|\varphi |\tau^{\theta}_{1}(\psi)$ for any $\varphi$ and $\psi\in \mathrm{Aut}(F_n)$.

Let $\Sigma_{g,1}$ be an oriented compact surface of genus $g\geq 2$ with one boundary component.
It is known that the mapping class group $\pi_0(\mathrm{Diff}^{+}(\Sigma_{g,1},\text{ id on } \partial \Sigma_{g,1}))$ is isomorphic to $\Gamma_{g,1}$.
The Dehn--Nielsen theorem states that the mapping class group $\Gamma_{g,1}$ can be regarded as a subgroup of $\mathrm{Aut}(F_{2g})$.
Then we define first Johnson map $\tau_1^{\theta}: \Gamma_{g,1}\to \Hom(H,H^{\otimes 2})$ corresponding to $\theta$.
Here, $H=H_1(F_{2g};\Z)=\pi_{1}(\Sigma_g,p_0,v_0)^{\mathrm{ab}}$.
It is also called the (first) extended Johnson homomorphism.
The kernel of the natural surjection $\Gamma_{g,1}\to \Gamma_{g,\ast}$ is known to be isomorphic to the infinite cyclic group $\Z$.
A generator $\zeta$ of this group is represented by the Dehn twist along the parallel loop homotopic to $\partial \Sigma_{g,1}$.
As $\tau_1^{\theta}(\zeta)$ vanishes for $\zeta \in \Gamma_{g,1}$,
$\tau_1^{\theta}: \Gamma_{g,1}\to \Hom(H,H^{\otimes 2})$ factors through $\Gamma_{g,\ast}\to \Hom(H,H^{\otimes 2})$.
By abuse of notation, we write $\tau_1^{\theta}: \Gamma_{g,\ast}\to \Hom(H,H^{\otimes 2})$.
This $\tau_1^{\theta}$ satisfies the above properties.
For a deeper discussion of the Magnus expansion derived from $\pi_1(\Sigma_g,p_0)$,
we refer the reader to \cite[\S 7]{zbMATH06411330}.

\subsection{Harmonic Magnus expansion}
The pointed harmonic volume can be regarded as a part in the framework of Kawazumi's harmonic Magnus expansion \cite{math/0603158, 1170.30001}.
We summarize without proof the relevant material on the harmonic Magnus expansion.

For a pointed compact Riemann surface $(C,P_0)$,
let $v\in T_{P_0}C\setminus \{0\}$ be a non-zero tangent vector at $P_0$.
We can define $\pi_1(C,P_0,v)$ by the fundamental group with the tangential base point.
It is isomorphic to a free group of rank $2g$.
We have a natural identification $H$ with $\pi_1(C,P_0,v)^{\mathrm{ab}}$.
Let $\delta_{P_0}\colon C^{\infty}(C)\ni f\mapsto f(P_{0})\in \R$ denote the delta 2-current on $C$ at $P_0$.
Throughout this section, we write $A^{q}(C)$ for the $q$-currents on $C$.
We obtain the unique $\widehat{T}_1$-valued 1-current $\omega=\sum_{m=1}^{\infty} \omega_{(m)}\in A^{1}(C)\otimes \widehat{T}_1$ satisfying three conditions:
\begin{enumerate}
\item $d\omega=\omega \wedge \omega -\mu\cdot \delta_{P_0}$, where $\mu\in H^{\otimes 2}$ is the intersection form.
\item The first term of $\omega$, denoted by $\omega_{(1)}\in A^{1}(C)\otimes H$, satisfies $\int_{\gamma}\omega_{(1)}=[\gamma] \in H$ for any $\gamma\in \pi_1(C,P_0,v)$.
\item $\int_{C}(\omega-\omega_{(1)})\wedge \ast \alpha=0$ for any closed 1-form $\alpha$ on $C$.
\end{enumerate}
We give an explicit representation of $\omega_{(1)}$ and $\mu$ for a fixed symplectic basis $\{a_i,b_i\}_{i=1,\ldots,g}\subset \pi_1(C,P_0,v)$.
Their homology classes are denoted by $A_i$ and $B_i\in H_1(C; \Z)$.
Let $\{\nu_i,\xi_i\}_{i=1,\ldots,g}$ be the basis of the real harmonic 1-forms on $C$ dual to $\{A_i,B_i\}_{i=1,\ldots,g}$, respectively, \textit{i.e.},
\begin{center}
$\int_{A_j}\nu_i=\delta_{i,j}=\int_{B_j}\xi_i$ and $\int_{A_j}\xi_i=0=\int_{B_j}\nu_i$.
\end{center}
We obtain $\omega_{(1)}=\sum_{i=1}^{g}(\nu_iA_i+\xi_iB_i)\in A^{1}(C)\otimes H$ and
$\mu=\sum_{i=1}^{g}(A_iB_i-B_iA_i)\in H^{\otimes 2}$.
We remark that the Poincar\'e dual of $\nu_i$ and $\xi_i$ are $\mathop{\mathrm{P.D.}}(\nu_i)=-B_i$ and $\mathop{\mathrm{P.D.}}(\xi_i)=A_i$.

Chen's iterated integrals induce an $\R$-valued Magnus expansion
\[
\theta^{0}=\theta^{(C,P_0,v)}\colon
\pi_1(C,P_0,v) \ni \gamma\mapsto 
1+\sum_{m=1}^{\infty}\int_{\gamma}\, \underbrace{\omega\omega\cdots \omega}_{m}
\in 1+\widehat{T}_1\otimes \R.
\]
The map from the triple $(C,P_0,v)$ to the Magnus expansions of the free group $F_{2g}$ is called the \textit{harmonic Magnus expansion}.
Here, $(C,P_0,v)$ can be also considered as an element of the Teichm\"{u}ller space.
Kawazumi \cite{math/0603158} studied the harmonic Magnus expansion on the universal family of compact Riemann surfaces.
The harmonic expansion $\theta^{0}\colon \pi_1(C,P_0,v)\to 1+\widehat{T}_1\otimes \R
=1+\prod_{m=1}^{\infty}H_\R^{\otimes m}$ decomposes $\theta^{0}=\sum_{m=1}^{\infty}\theta^{0}_{m}$.
Its second part $\theta^{0}_2\colon \pi_1(C,P_0,v)\ni \gamma\mapsto \int_{\gamma}(\omega_{(1)}\omega_{(1)}+\omega_{(2)})\in H_{\R}^{\otimes 2}$ is a homomorphism to an abelian group.
Intersection pairing gives the natural identity as $\Gamma_{g,1}$- and $\Gamma_{g,\ast}$-modules
\[
H\xrightarrow{\cong} H^{\ast},\quad X\mapsto (\cdot \mapsto (X,\cdot)).
\]
Then we obtain the identity
\begin{align}\label{natural identity}
\Hom(H,H_{\R}^{\otimes 2})\cong H^{\ast}\otimes H_{\R}^{\otimes 2}
\cong  (H^{\otimes 2})^{\ast}\otimes H^{\ast}\otimes \R
\cong \Hom(H^{\otimes 3},\R),
\end{align}
through the map
$\varphi\mapsto \left(a\otimes b\otimes c\mapsto (a\otimes b,\varphi(c))\right)$.
By definition, the restriction of $\theta^{0}_2|_{K\otimes H}$ modulo $\Z$ equals the pointed harmonic volume $I_{(C,P_0)}\in \Hom(K\otimes H, \R/{\Z})$.
We emphasize that $\theta_2^{0}$ can be considered as a natural extension of $I_{(C,P_0)}$.
We remark that the restriction $\theta_2^{0}\in \Hom(K\otimes H, \R/{\Z})$ does not depend on the choice of non-zero tangent vector $v\in T_{P_0}C$ because its delta 2-current term vanishes in this case.

\subsection{Main theorem}
We now prove the equality between the pointed harmonic volume $I_{(C,P_0)}$ and extended Johnson homomorphism (Johnson map) $\tau_1$ using a connecting homomorphism.

For a pointed compact Riemann surface $(C,P_0)$, we assume that $\varphi$ is a biholomorphism of $C$ fixing $P_0$ with order $n\geq 2$.
The group $G$, generated by $\varphi$, is a cyclic group of order $n$ and subgroup of $\mathrm{Aut}(C,P_0)\subset \Gamma_{g,\ast}$.
We recall the pointed harmonic volume $I_{(C,P_0)}\in \Hom(K\otimes H, \R/{\Z})$.
A diagonal action of $\Gamma_{g,\ast}$ makes $M=\Hom(K\otimes H, \Z)$ a $G$-module.
Set $M_{\R}=\Hom(K\otimes H, \R)$ and $M_{\R/{\Z}}=\Hom(K\otimes H, \R/{\Z})$.
A natural short exact sequence $0\to \Z \to \R\to \R/{\Z}\to 0$ induces the long exact sequence of the cohomology group of $G$
\[
\xymatrix{
\ar[r] & H^{0}(G; M_{\R})\ar[r] & H^{0}(G; M_{\R/{\Z}})\ar[r]^{\delta} & H^{1}(G; M) \ar[r]& H^{1}(G; M_{\R})=0}.
\]
The pointed harmonic volume $I_{(C,P_0)}$ is an element of $H^{0}(G; M_{\R/{\Z}})$.
The cohomology class $[\tau_1^{\theta}|_{G}]\in H^{1}(G; M)$ is known to be independent of the choice of Magnus expansion $\theta$.
We write $[\tau_1]$ for $[\tau_1^{\theta}|_{G}]$.
\begin{thm}
$\delta I_{(C,P_0)}=-[\tau_1]$.
\end{thm}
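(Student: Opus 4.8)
The plan is to evaluate $\delta I_{(C,P_0)}$ at the cochain level and to recognise the outcome as $-\tau_1^{\theta^{0}}|_{G}$, where $\theta^{0}$ is the harmonic Magnus expansion of $(C,P_0)$. First I would unwind the connecting homomorphism in the long exact sequence above: for the $G$-invariant class $I_{(C,P_0)}\in H^{0}(G;M_{\R/\Z})$, choose a lift $\tilde I\in M_{\R}$ (this is possible because $K\otimes H$ is free abelian, so $M_{\R}\to M_{\R/\Z}$ is surjective); then $\delta I_{(C,P_0)}$ is represented by the 1-cocycle $\varphi\mapsto\varphi\cdot\tilde I-\tilde I$, which lands in $M$ precisely because the class of $I_{(C,P_0)}$ is $G$-fixed. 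So it remains to pick a convenient lift and compute its coboundary.

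The convenient lift is the one furnished by the harmonic Magnus expansion. By the identity $\theta^{0}_{2}|_{K\otimes H}\equiv I_{(C,P_0)}\pmod{\Z}$ recalled above, the second part $\theta^{0}_{2}$ of $\theta^{0}$, viewed through the $\Gamma_{g,\ast}$-equivariant identification \eqref{natural identity} as an element $\tilde I\in M_{\R}=\Hom(K\otimes H,\R)$, is such a lift. I would then compute $\varphi\cdot\tilde I-\tilde I$ for $\varphi\in G$. Since $\R$ is a trivial $G$-module and \eqref{natural identity} carries the $\Gamma_{g,\ast}$-action on $\Hom(K\otimes H,\R)$ to the conjugation action $\lambda\mapsto|\varphi|\circ\lambda\circ|\varphi|^{-1}$ on $\Hom(H,H_{\R}^{\otimes 2})$, and since $|\varphi|^{-1}[\gamma]=[\varphi^{-1}(\gamma)]$ for $\gamma\in\pi_1(C,P_0,v)$, formula \eqref{tau_1} yields, for $[\gamma]\in H$,
\[
(\varphi\cdot\tilde I-\tilde I)([\gamma])=|\varphi|\,\theta^{0}_{2}(\varphi^{-1}(\gamma))-\theta^{0}_{2}(\gamma)=-\tau_1^{\theta^{0}}(\varphi)[\gamma].
\]
Therefore $\delta I_{(C,P_0)}=[\varphi\mapsto\varphi\cdot\tilde I-\tilde I]=-[\tau_1^{\theta^{0}}|_{G}]$, and since the class $[\tau_1^{\theta}|_{G}]\in H^{1}(G;M)$ is independent of the Magnus expansion $\theta$, this equals $-[\tau_1]$.

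I expect the only genuine obstacle to be making the two paragraphs above rigorous, i.e. attaching an honest meaning to ``$\theta^{0}_{2}$ as an element of $M_{\R}$'' and justifying the manipulations. A priori $\theta^{0}_{2}$ is not a homomorphism on $\pi_1(C,P_0,v)$ but only satisfies the quadratic relation \eqref{theta_2}, and it depends on the tangential base point $v$; what rescues the argument is exactly that the correction term $\omega_{(2)}$ --- equivalently the 1-form $\eta$ entering the definition of $I_{(C,P_0)}$ --- annihilates the quadratic ambiguity once one restricts to $K\otimes H$, and that the resulting class in $M_{\R/\Z}$ does not depend on $v$, as was remarked. One must also check that \eqref{tau_1} persists under this restriction and pairing, that the sign conventions in \eqref{natural identity} and in the two $G$-actions are compatible as used, and that the stated $\theta$-independence of $[\tau_1^{\theta}|_{G}]$ in the integral group $H^{1}(G;M)$ really covers the $\R$-valued harmonic expansion. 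Granting these routine-but-delicate verifications, the bookkeeping above gives $\delta I_{(C,P_0)}=-[\tau_1]$.
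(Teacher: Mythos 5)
Your overall skeleton---unwind the connecting homomorphism as the coboundary of a lift of $I_{(C,P_0)}$ to $M_{\R}$, and recognise that coboundary via \eqref{tau_1}---is the same as the paper's, but the lift you choose does not exist, and repairing this is not a ``routine-but-delicate verification'': it is the crux of the proof. The map $\theta^{0}_{2}$ does not define an element of $M_{\R}=\Hom(K\otimes H,\R)$. By \eqref{theta_2} it is only a quadratic function on $\pi_1(C,P_0,v)$, and after pairing against $\sum a_i\otimes b_i\in K$ the defect $\bigl(\sum a_i\otimes b_i,[\gamma_1]\otimes[\gamma_2]\bigr)=\sum_i(a_i,[\gamma_1])(b_i,[\gamma_2])$ is an integer but not zero; so one gets a genuine homomorphism on $K\otimes H$ only with values in $\R/\Z$ --- which is exactly $I_{(C,P_0)}$ --- and there is no canonical $\R$-valued lift built from $\theta^{0}_{2}$ alone. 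Worse, if your lift did make sense, your own formula $\varphi\cdot\tilde I-\tilde I=-\tau_1^{\theta^{0}}(\varphi)$ combined with Lemma \ref{vanishing} (the harmonic expansion is natural under biholomorphisms fixing $P_0$, so $\tau_1^{\theta^{0}}$ vanishes identically on $G$) would give $\delta I_{(C,P_0)}=0$. The closing appeal to $\theta$-independence of $[\tau_1^{\theta}|_{G}]$ cannot rescue this: that independence holds among \emph{integral} Magnus expansions, where $\theta_2-\theta'_2$ is an $M$-valued homomorphism and its coboundary lies in $C^{1}(G;M)$; for the real-valued harmonic expansion the comparison only takes place in $H^{1}(G;M_{\R})=0$, which carries no information about the integral class. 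This is precisely the point you flagged at the end and deferred, and it is where the argument breaks.

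The paper's repair is to introduce an auxiliary integral Magnus expansion $\theta$ and set $u_2=\theta_2-\theta^{0}_2$. By transitivity of $\mathrm{IA}(\widehat{T})$ on expansions this difference \emph{is} an honest element of $\Hom(H,H_{\R}^{\otimes 2})$ (the quadratic defects in \eqref{theta_2} cancel), so $-u_2$ defines an element of $M_{\R}$; and since $\theta_2$ pairs integrally with $K$, one has $-u_2\equiv\theta^{0}_2\equiv I_{(C,P_0)}$ mod $\Z$, so $-u_2$ is a legitimate lift. Then $d(-u_2)=-(\tau_1^{\theta}-\tau_1^{\theta^{0}})=-\tau_1^{\theta}$ on $G$, where the last equality uses Lemma \ref{vanishing}, which in turn requires the isotopy Lemma \ref{isotopy invariance} to correct the tangential base point moved by $\varphi$. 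This gives $\delta I_{(C,P_0)}=-[\tau_1^{\theta}|_{G}]=-[\tau_1]$ with an integral cocycle on the right, as the theorem asserts. To complete your proof you would need to replace your lift by $-u_2$ and supply the two lemmas; as written, the central step is vacuous.
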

To prove this theorem, we need to show lemmas.
Assume that the isotopy $\psi_t: C\times [0,1]\to C$ satisfies
$\psi_0=1_{C}$, $\psi_t(P_0)=P_0$ for any $t\in [0,1]$, and the existence of $U_{P_0}$ with $\mathop{\mathrm{supp}}(\psi_t) \subset U_{P_0}$.
Here $U_{P_0}\subset C$ is a sufficiently small neighbourhood at $P_0$ and $\mathop{\mathrm{supp}}(\psi_t)\subset C$ is the closure of subsets satisfying $\psi_t\neq 1_{C}$.
\begin{lem}\label{isotopy invariance}
Take the above isotopy $\psi_t: C\times [0,1]\to C$.
For any $\gamma\in \pi_1(C,P_0,v)$,
\[
\theta_2^0(\psi_1\gamma)=\theta_2^0(\gamma)\in H^{\otimes 2}.
\]
\end{lem}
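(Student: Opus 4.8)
The plan is to reduce the assertion to the behaviour of the degree-two part of a Magnus expansion under an inner automorphism, the conjugating element being null-homologous. Here $\psi_1\gamma$ denotes the image of $\gamma$ under the automorphism of $\pi_1(C,P_0,v)$ induced by $\psi_1$. First I would recall that the $\widehat{T}_1$-valued $1$-current $\omega=\sum_m\omega_{(m)}$ is determined by conditions (1)--(3) alone, none of which refers to the tangent vector $v$; consequently $\theta^0_2(\gamma)=\int_{\gamma}(\omega_{(1)}\omega_{(1)}+\omega_{(2)})\in H^{\otimes 2}$ is defined for every loop at $P_0$ and, by the homotopy invariance of Chen's iterated integrals, depends only on its class rel $P_0$ together with the tangential data at the endpoints.

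The crucial step is topological. Since $\psi_0=1_C$ and $\psi_t(P_0)=P_0$ for all $t$, the diffeomorphism $\psi_1$ is isotopic to $1_C$ through diffeomorphisms fixing $P_0$, hence acts as the identity on $\pi_1(C,P_0)$; and since $\mathop{\mathrm{supp}}(\psi_t)\subset U_{P_0}$ with $U_{P_0}$ a small embedded disc, the only discrepancy between $(\psi_1)_{\ast}$ and the identity on $\pi_1(C,P_0,v)$ is the rotation performed by $\psi_1$ on the tangent direction at the common initial and terminal point. Unwinding this, there is an element $d\in\pi_1(C,P_0,v)$ represented by a loop lying in $U_{P_0}$---a power of the positively oriented small loop around $P_0$, equivalently a power of the boundary word $\prod_i[a_i,b_i]$---with $\psi_1\gamma=d\gamma d^{-1}$ for every $\gamma$. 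As this loop bounds a disc in $C$, its class satisfies $[d]=0\in H$.

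Finally I would extract the degree-two component of $\theta^0(d\gamma d^{-1})=\theta^0(d)\,\theta^0(\gamma)\,\theta^0(d)^{-1}$ inside $1+\widehat{T}_1\otimes\R$---equivalently, iterate \eqref{theta_2}---to obtain $\theta^0_2(d\gamma d^{-1})=\theta^0_2(\gamma)+[d][\gamma]-[\gamma][d]$, which equals $\theta^0_2(\gamma)$ since $[d]=0$. I expect the main obstacle to be the middle paragraph: correctly describing $(\psi_1)_{\ast}$ on the fundamental group with tangential base point and verifying that the discrepancy is exactly conjugation by a loop supported in $U_{P_0}$; this is where the hypotheses $\psi_t(P_0)=P_0$ and $\mathop{\mathrm{supp}}(\psi_t)\subset U_{P_0}$ are used. (An alternative, more computational route notes that $\psi_1^{\ast}\omega_{(m)}-\omega_{(m)}$ is supported in $\overline{U_{P_0}}$ and tracks these corrections through the length-$\le 2$ iterated integrals via the standard identities for $\int_{\gamma}(df)\alpha$.)
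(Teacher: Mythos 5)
Your overall strategy --- absorb the effect of $\psi_1$ into a homologically trivial correction at the base point and kill it with the conjugation identity $\theta_2^0(d\gamma d^{-1})=\theta_2^0(\gamma)+[d][\gamma]-[\gamma][d]$ --- is the right idea, and it does yield a complete proof in the special case $(d\psi_1)_{P_0}v=v$. The gap is in your middle paragraph, which is exactly where the lemma has its content. The isotopy is only required to fix the \emph{point} $P_0$, not the tangent vector, so $\psi_1$ rotates $v$ to $v_1=(d\psi_1)_{P_0}v$, which in general differs from $v$; the paper's proof opens by flagging precisely this ($\psi_1\gamma\in\pi_1(C,P_0,v_1)$). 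This is not a degenerate case you may ignore: in the lemma's only application (Lemma \ref{vanishing}) the isotopy is chosen so that $(d\varphi^k\psi_1)_{P_0}v=v$, i.e.\ $\psi_1$ is forced to rotate $v$ by the nontrivial angle of $(d\varphi^{-k})_{P_0}$. When $v_1\neq v$ the equation $\psi_1\gamma=d\gamma d^{-1}$ does not even typecheck in $\pi_1(C,P_0,v)$, and no integer power $d$ of the boundary word can account for the discrepancy: the correction consists of two paths $\ell',\ell''$ (obtained by applying the isotopy to the initial and final segments of $\gamma$) whose winding about $P_0$ is a \emph{fractional} number $\pm s_0=\pm\frac{1}{2\pi}\int_{\ell'}d\arg z\notin\Z$. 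These are not group elements, so your formula for $\theta_2^0$ of a conjugate does not apply as written.

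The paper closes this gap by an explicit computation: it writes $[\psi_1\gamma]=[\ell''\gamma\ell']$ and evaluates $\theta^{0}(\ell')=\theta^{0}(\ell'')^{-1}=\exp(s_0\mu)$ using Kawazumi's formula $\theta^{0}(w_0)=\exp(-\mu)$ for the boundary word; the two degree-two contributions $\pm s_0\mu$ then cancel. Note that $\omega_{(2)}$ is a current with a $d\arg$-type singularity at $P_0$ (coming from $d\omega=\omega\wedge\omega-\mu\cdot\delta_{P_0}$), which is why a path winding by the fractional angle $2\pi s_0$ contributes $s_0\mu$ rather than $0$; your parenthetical alternative of tracking $\psi_1^{\ast}\omega_{(m)}-\omega_{(m)}$ on $\overline{U_{P_0}}$ would have to confront the same singularity. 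To repair your argument you would either have to add the hypothesis $(d\psi_1)_{P_0}v=v$ (which would break the application to Lemma \ref{vanishing}), or extend the conjugation computation to the tangential-base-point groupoid and prove $\theta_2^0(\ell')+\theta_2^0(\ell'')=0$ there --- which is essentially the $\exp(\pm s_0\mu)$ computation the paper carries out.
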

\begin{proof}
Put $v_1=(d\psi_1)_{P_0} v\in T_{P_0}C$.
The point of this lemma is that $\psi_1\gamma\in \pi_1(C,P_0,v_1)$ but the result follows using the isotopy $\psi_t$.

There exists a $t_0\in [0,1/2)$ such that $\gamma([t_0,1-t_0])\subset (\mathop{\mathrm{supp}}(\psi_t))^{c}$.
We can take two loops:
\begin{center}
$\ell^{\prime}(t)=\left\{
  \begin{array}{ll}
  \psi_{1-t}(\gamma(t)),& 0\leq t\leq t_0, \\
  \gamma(t_0), &t_0\leq t\leq 1-t_0, \\
  \gamma(1-t), &1-t_0\leq t\leq 1,
  \end{array}
  \right.$
and
$\ell^{\prime\prime}(t)=\left\{
  \begin{array}{lll}
  \gamma(1-t), & 0\leq t\leq t_0, \\
  \gamma(1-t_0), & t_0\leq t\leq 1-t_0, \\
  \psi_{t}(\gamma(t)), & 1-t_0\leq t\leq 1.
  \end{array}
  \right.$
\end{center}
It is easily seen that $[\psi_1\gamma]=[\ell^{\prime\prime}\gamma\ell^{\prime}]\in \pi_1(C,P_0,v)$.
Furthermore, we have
\[
\dfrac{1}{2\pi}\int_{\ell^{\prime}}d\arg{z}=-\dfrac{1}{2\pi}\int_{\ell^{\prime\prime}}d\arg{z},
\]
where $z$ denotes a local coordinate at $P_0$.
We write $s_0$ for this value.
Let $w_0$ be a negative loop in $C$ around $P_0$.
Fix a symplectic generator $\{a_i, b_i\}_{i=1,2,\ldots,g}\subset \pi_1(C,P_0,v)$.
The word $w_0$ is given by $\prod_{i=1}^{g}[a_i, b_i]$ for each $n$, where $[a_i, b_i]$ denotes the commutator bracket $a_ib_ia_i^{-1}b_i^{-1}$.
From \cite[Proposition 6.3]{math/0603158}, we obtain $\theta^{0}(w_0)=\exp(-\mu)\in 1+\widehat{T}_1\otimes \R$.
Here $\mu$ is the intersection form $\mu=\sum_{i=1}^{g}(A_iB_i-B_iA_i)\in H^{\otimes 2}$.
This provides
\[
\theta^{0}(\ell^{\prime})=\theta^{0}(\ell^{\prime\prime})^{-1}
=\exp(s_0\mu)\in 1+\widehat{T}_1\otimes \R.
\]
We have
\begin{align*}
\theta_{2}^{0}(\psi_1\gamma)
&=\theta_{2}^{0}(\ell^{\prime}\gamma\ell^{\prime\prime})\\
&=s_0\mu+\theta_{2}^{0}(\gamma)-s_0\mu\quad\text{mod }\widehat{T}_3\otimes \R\\
&=\theta_{2}^{0}(\gamma).
\end{align*}
\end{proof}

\begin{lem}\label{vanishing}
For a harmonic Magnus expansion $\theta^{0}$ and $k\in \Z_{\geq 1}$, biholomorphism $\varphi \in \Aut(C,P_0)$, and $\gamma\in \pi_1(C,P_0,v)$, we have $\tau_1^{\theta^{0}}(\varphi^{k})[\gamma]=0$.
\end{lem}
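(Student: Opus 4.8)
The plan is to combine the explicit formula \eqref{tau_1} with the naturality of the harmonic expansion under biholomorphisms, using Lemma \ref{isotopy invariance} to absorb the discrepancy caused by the tangent vector. By \eqref{tau_1},
\[
\tau_1^{\theta^{0}}(\varphi^{k})[\gamma]=\theta_2^{0}(\gamma)-|\varphi^{k}|\,\theta_2^{0}\bigl(\varphi^{-k}(\gamma)\bigr)\in H^{\otimes 2},
\]
where $\varphi^{-k}(\gamma)$ is the image of $\gamma$ under the automorphism of $\pi_{1}(C,P_0,v)$ attached to the mapping class $[\varphi^{-k}]\in\Gamma_{g,\ast}$ via the Dehn--Nielsen embedding. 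Since $|\varphi^{k}|$ and $|\varphi^{-k}|$ are mutually inverse automorphisms of $H^{\otimes 2}$, it suffices to establish the equivariance $\theta_2^{0}(\varphi^{-k}(\gamma))=|\varphi^{-k}|\,\theta_2^{0}(\gamma)$, and the lemma follows.

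First I would make this action explicit. The biholomorphism $\varphi^{-k}$ fixes $P_0$ but sends $v$ to $v'=(d\varphi^{-k})_{P_0}v$, so a priori it only induces an isomorphism $\pi_{1}(C,P_0,v)\xrightarrow{\ \sim\ }\pi_{1}(C,P_0,v')$. Fix a diffeomorphism $\rho$ of $C$ which is isotopic to $1_{C}$ through an isotopy supported in a small neighbourhood of $P_0$ fixing $P_0$, and with $(d\rho)_{P_0}v'=v$; then the automorphism representing $[\varphi^{-k}]$ is $\gamma\mapsto[\rho\circ\varphi^{-k}\circ\gamma]$. (Another choice of $\rho$ modifies this by a power of the boundary Dehn twist $\zeta$, which does not affect $\tau_1^{\theta^{0}}$ because $\tau_1^{\theta^{0}}(\zeta)=0$; equivalently, the value computed below does not depend on $\rho$ by Lemma \ref{isotopy invariance}.) Writing $\rho=\psi_1$ for an isotopy $\psi_t$ with $\psi_0=1_{C}$, $\psi_t(P_0)=P_0$ and $\mathop{\mathrm{supp}}(\psi_t)\subset U_{P_0}$, and applying Lemma \ref{isotopy invariance} to the loop $\varphi^{-k}\circ\gamma$, one gets
\[
\theta_2^{0}\bigl(\varphi^{-k}(\gamma)\bigr)=\theta_2^{0}\bigl(\rho\circ\varphi^{-k}\circ\gamma\bigr)=\theta_2^{0}\bigl(\varphi^{-k}\circ\gamma\bigr),
\]
the right-hand side being the integral of $\omega_{(1)}\omega_{(1)}+\omega_{(2)}$ over the honest image path $\varphi^{-k}\circ\gamma$, a loop at $P_0$ with tangential direction $v'$.

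Next I would invoke naturality. The $\widehat{T}_1$-valued $1$-current $\omega$ and the intersection form $\mu$ are characterized by the conditions (1)--(3), all of which are preserved by biholomorphisms; hence $\varphi^{-k}\colon(C,P_0,v)\to(C,P_0,v')$ carries the harmonic datum of $(C,P_0,v)$ to that of $(C,P_0,v')$ compatibly with the action $|\varphi^{-k}|$ on $H$, and consequently
\[
\theta^{(C,P_0,v')}(\varphi^{-k}\circ\gamma)=|\varphi^{-k}|\,\theta^{0}(\gamma)
\]
(this equivariance is part of Kawazumi's construction of the harmonic expansion, cf.\ \cite{math/0603158}). Taking the degree-two part, and observing that $\theta_2^{0}(\varphi^{-k}\circ\gamma)$ is literally the degree-two part of $\theta^{(C,P_0,v')}(\varphi^{-k}\circ\gamma)$ since the currents $\omega_{(1)},\omega_{(2)}$ do not depend on the tangential base point, we obtain $\theta_2^{0}(\varphi^{-k}\circ\gamma)=|\varphi^{-k}|\,\theta_2^{0}(\gamma)$. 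Combining the two displayed identities gives $\theta_2^{0}(\varphi^{-k}(\gamma))=|\varphi^{-k}|\,\theta_2^{0}(\gamma)$, and substituting into \eqref{tau_1} yields $\tau_1^{\theta^{0}}(\varphi^{k})[\gamma]=\theta_2^{0}(\gamma)-|\varphi^{k}|\,|\varphi^{-k}|\,\theta_2^{0}(\gamma)=0$.

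The step I expect to be the main obstacle is the bookkeeping around the tangential base point: $\varphi$ is a biholomorphism of $(C,P_0)$ but not of the framed triple $(C,P_0,v)$, so the naturality of $\theta^{0}$ cannot be applied verbatim and must be routed through Lemma \ref{isotopy invariance}. Two small points must be checked: that Lemma \ref{isotopy invariance} really does apply to the loop $\varphi^{-k}\circ\gamma$, whose tangential direction at $P_0$ is $v'\neq v$ (its proof only uses that the loop leaves $\mathop{\mathrm{supp}}(\psi_t)$ outside a small subinterval of $[0,1]$, so this causes no trouble); and that the correcting diffeomorphism $\rho$ can indeed be chosen isotopic to $1_{C}$ through a support near $P_0$ fixing $P_0$, which is exactly where the definition of $\Gamma_{g,1}$ and the vanishing of $\tau_1^{\theta^{0}}$ on the boundary twist $\zeta$ come into play.
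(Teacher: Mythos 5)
Your argument is correct and follows essentially the same route as the paper: the paper likewise absorbs the tangent-vector discrepancy by composing with a small isotopy $\psi_t$ supported near $P_0$ (so that $\varphi^{k}\psi_1$ fixes $v$), invokes Lemma \ref{isotopy invariance}, and then concludes from the identity $|\varphi^{k}|\,\theta_2^{0}(\varphi^{-k}(\gamma))=\theta_2^{0}(\gamma)$, i.e.\ the $\Aut(C,P_0)$-equivariance of the harmonic expansion. The only difference is cosmetic: you place the correcting diffeomorphism on $\varphi^{-k}$ rather than $\varphi^{k}$ and spell out the equivariance step (via the uniqueness of $\omega$ under conditions (1)--(3)) that the paper leaves implicit.
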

\begin{proof}
Take the isotopy $\psi_t$ in Lemma \ref{isotopy invariance} with $(d \varphi^k\psi_1)_{P_0}v=v$.
Using Lemma \ref{isotopy invariance}, we obtain
\begin{align*}
\tau_1^{\theta^{0}}(\varphi^{k})[\gamma]
&=\tau_1^{\theta^{0}}(\varphi^{k}\psi_1)[\gamma]\\
&=\theta_{2}^{0}(\gamma)-|\varphi^{k}\psi_1|\theta_{2}^{0}(\psi_1^{-1}\varphi^{-k}(\gamma))\\
&=\theta_{2}^{0}(\gamma)-|\varphi^{k}|\theta_{2}^{0}(\varphi^{-k}(\gamma))\\
&=\theta_{2}^{0}(\gamma)-\theta_{2}^{0}(\gamma)=0.
\end{align*}
\end{proof}
As $\mathop{\mathrm{IA}}(\widehat{T})=\prod_{k=1}^{\infty}\Hom(H,H^{\otimes (k+1)})$ acts transitively on the space of $\R$-valued Magnus expansions of $F_{2g}$, for any Magnus expansion $\theta$, there exists $u\in \Hom(H,\widehat{T}_2\otimes \R)$ such that $\theta -\theta^{0}=u$.
This transitivity is the key point of the proof.
Taking the second part, we get $\theta_2 -\theta_2^{0}=u_2\in \Hom(H,H^{\otimes 2}_{\R})$.
The equation $\tau_{1}^{\theta}-\tau_{1}^{\theta^{0}}=du_2\in C^{1}(\Gamma_{g,\ast}; M_{\R})$ follows.
Indeed, the equation \ref{tau_1} yields
\begin{align*}
(\tau_{1}^{\theta}-\tau_{1}^{\theta^{0}})(\varphi)[\gamma]
&=\theta_2(\gamma)-|\varphi| \theta_2(\varphi^{-1}(\gamma))
-\left(\theta_2^{0}(\gamma)-|\varphi| \theta_2^{0}(\varphi^{-1}(\gamma))\right)\\
&=(\theta_2-\theta_2^{0})(\gamma)-|\varphi| (\theta_2-\theta_2^{0})(\varphi^{-1}(\gamma))\\
&=u_2(\gamma)-|\varphi| u_2(\varphi^{-1}(\gamma))\\
&=(u_2-\varphi u_2)[\gamma]\\
&=du_2(\varphi)[\gamma].
\end{align*}
The restriction $G\subset \Gamma_{g,\ast}$ gives $\tau_1^{\theta}=du_2\in C^{1}(G; M_{\R})$ using Lemma \ref{vanishing}.

By the identity \ref{natural identity}, restriction $K\otimes H\subset H^{\otimes 3}$, and natural projection $\R\to \R/{\Z}$, we obtain the composition of homomorphisms
\[
\xymatrix{
\ar[r]^{\cong} \Hom(H,H_{\R}^{\otimes 2}) &
\ar[r] \Hom(H^{\otimes 3},\R) &
\ar[r] \Hom(K\otimes H,\R) &
\Hom(K\otimes H,\R/{\Z}).}
\]
This composition and $\theta_2 -\theta_2^{0}=u_2\in \Hom(H,H_{\R}^{\otimes 2})$ imply $u_2=-\theta_2^{0}=-I_{(C,P_0)}\in \Hom(K\otimes H,\R/{\Z})=M_{\R/{\Z}}$.
Finally, we have
\[
\tau_1^{\theta}=du_2=-d\theta_2^{0}.
\]
This completes the main theorem.

\subsection{Computation of the first extended Johnson homomorphism}
\label{Computation of the first extended Johnson homomorphism}
We compute the extended Johnson homomorphism $\tau_1^{\mathrm{std}}$ for the hyperelliptic curve $C_{n}$ associated with the standard Magnus expansion determined by a fixed symplectic generator of $\pi_1(C_n,P_0)$.
Use the notation of Section \ref{hyperelliptic curve}.
We fix a symplectic generator $\{a_i, b_i\}_{i=1,2,\ldots,g}\subset \pi_1(C_n,P_0)$ such that
\begin{center}
$a_i=\ell_{2(g-i)-1}^{-1}$ and $b_i=
(\ell_1\cdot \ell_3\cdot \cdots \cdot \ell_{2(g-i)-1})^{-1}
\ell_0\cdot \ell_1\cdot \ell_2\cdot \cdots \cdot \ell_{2(g-i)}$.
\end{center}
Indeed, we have $\prod_{i=1}^{g}[a_i, b_i]=1\in \pi_1(C_n, P_0)$ for each $n$.
From the definition of standard Magnus expansion $\mathrm{std}$, we have $\mathrm{std}_2(a_i)=\mathrm{std}_2(b_i)=0$ for each $i$.
Let $L_i\in H_1(C_n;\Z)$ denote the homology class of $\ell_i$ throughout this subsection.
The homology class $\widetilde{L}_i$ is defined by $L_0+L_2+\cdots +L_{2i-2}$ for $i\geq 1$.
Equation \ref{theta_2} gives the formula.
\begin{lem}
For $i=1,2,\ldots,g-1$ and both $n=2g+2$ and $2g+1$, and $(i,n)=(g,2g+2)$, we have
\begin{center}
$\mathrm{std}_2(\ell_{2i-1})=L_{2i-1}^2$
and $\mathrm{std}_2(\ell_{2i})=
L_{2i-1}\widetilde{L}_i-\widetilde{L}_i(L_{2i-1}+L_{2i})$.
\end{center}
Moreover, $\mathrm{std}_2(\ell_{0})=0$.

For only $n=2g+2$, 
\begin{center}
$\displaystyle \mathrm{std}_2(\ell_{2g+1})=\sum_{1\leq i<j\leq g}L_{2j-1}L_{2i-1}$.
\end{center}

For only $n=2g+1$, 
\begin{center}
$\displaystyle \mathrm{std}_2(\ell_{2g})=\sum_{1\leq i<j\leq 2g-1}L_{j}L_{i}+\sum_{i=0}^{g}L_{2i}^2-\sum_{i=1}^{g-1}\{L_{2i-1}\widetilde{L}_i-\widetilde{L}_i(L_{2i-1}+L_{2i})\}$.
\end{center}
\end{lem}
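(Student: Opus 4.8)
The plan is to unwind each $\ell_k$ into a word in the chosen symplectic generators and then read off $\mathrm{std}_2$ from equation \ref{theta_2}. First I would record the algebraic machinery. Iterating \ref{theta_2} gives, for any word $\gamma_1\cdots\gamma_r$ in $F_{2g}$,
\[ \mathrm{std}_2(\gamma_1\cdots\gamma_r)=\sum_{s=1}^{r}\mathrm{std}_2(\gamma_s)+\sum_{1\le s<t\le r}[\gamma_s]\otimes[\gamma_t], \]
and, putting $\gamma_2=\gamma_1^{-1}$, the inverse rule $\mathrm{std}_2(\gamma^{-1})=[\gamma]^{\otimes 2}-\mathrm{std}_2(\gamma)$. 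Since $\mathrm{std}_2(a_i)=\mathrm{std}_2(b_i)=0$, once a loop is written as a word in the $a_i^{\pm1},b_i^{\pm1}$ its $\mathrm{std}_2$ is built solely from the $[\,\cdot\,]^{\otimes 2}$-terms contributed by the inverted generators and from the cross-terms $[\gamma_s]\otimes[\gamma_t]$. The last ingredient is the homology dictionary, read off the definitions: $[a_i]=-L_{2(g-i)-1}$ and $[b_i]=L_0+L_2+\cdots+L_{2(g-i)}=\widetilde L_{\,g-i+1}$; in particular $b_g=\ell_0$, so $[b_g]=L_0$ and $\mathrm{std}_2(\ell_0)=0$.

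Second, I would write each $\ell_k$ as a word in the symplectic generators, using the presentation of $\pi_1(C_n,P_0)$ and the relations collected in Subsection \ref{hyperelliptic curve}. For the interior odd loops this is immediate, $\ell_{2i-1}=a_{g-i}^{-1}$ for $1\le i\le g-1$, whence $\mathrm{std}_2(\ell_{2i-1})=[a_{g-i}]^{\otimes 2}=L_{2i-1}^{\otimes 2}$. For the interior even loops, comparing the consecutive elements $b_{g-i}$ and $b_{g-i+1}$ --- both of the shape $(\text{odd product})^{-1}(\text{initial product})$, differing by exactly the two letters $\ell_{2i-1}$ and $\ell_{2i}$ --- yields the four-letter word $\ell_{2i}=a_{g-i}\,b_{g-i+1}^{-1}\,a_{g-i}^{-1}\,b_{g-i}$, into which the rules above, after the $[a_{g-i}]^{\otimes 2}$ cancel, give $L_{2i-1}\widetilde L_i-\widetilde L_i(L_{2i-1}+L_{2i})$. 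The remaining boundary loops ($\ell_{2g-1}$, $\ell_{2g}$ and, for $n=2g+2$, $\ell_{2g+1}$, as well as the $i=g$ instances when $n=2g+2$) are not single generators; here I would solve each out of a product relation $\prod_k\ell_k=1$, $\prod_k\ell_{2k}=1$, $\prod_k\ell_{2k+1}=1$, writing it as the inverse of a product of already-computed loops and applying the iterated rule. Reducing with the homology relations then yields the stated closed forms: $\sum_{1\le i<j\le g}L_{2j-1}\otimes L_{2i-1}$ for $\ell_{2g+1}$, and for $n=2g+1$ the longer double sum $\sum_{1\le i<j\le 2g-1}L_j\otimes L_i$ together with the correction terms assembled from the $\mathrm{std}_2(\ell_k)$ already found --- the reversed order $L_j\otimes L_i$ being exactly the fingerprint of inverting a long product via $\mathrm{std}_2(\gamma^{-1})=[\gamma]^{\otimes 2}-\mathrm{std}_2(\gamma)$.

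The real work is this last step. Two points need care. First, $\prod_k\ell_k=1$ and its sub-products hold in $\pi_1(C_n,P_0)$, whereas $\mathrm{std}$ and $\theta_2$ live on the free group $\pi_1(C_n,P_0,v)\cong F_{2g}$, where such a product is not $1$ but a power of the boundary word $\prod_{i=1}^{g}[a_i,b_i]$; one must therefore carry the corresponding multiple of $\mu$, noting that $\mathrm{std}_2\bigl(\prod_{i=1}^{g}[a_i,b_i]\bigr)=\mu$ is itself a one-line consequence of \ref{theta_2} (the extra cross-terms drop because homology classes of commutators vanish). Second, putting the resulting long tensor into the stated normal form is pure bookkeeping with the relations among the $L_k$ --- only $\sum_k L_k=0$ for $n=2g+1$, and in addition the even and odd sub-relations for $n=2g+2$ --- but it is where sign and ordering slips would creep in. Everything else is exactly what the one-line proof advertises: equation \ref{theta_2} gives the formula.
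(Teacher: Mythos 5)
Your proposal is correct and is precisely the computation the paper compresses into the single line ``Equation \ref{theta_2} gives the formula'': solve each $\ell_k$ for a word in the $a_i^{\pm 1},b_i^{\pm 1}$, then expand $\mathrm{std}_2$ multiplicatively using $\mathrm{std}_2(a_i)=\mathrm{std}_2(b_i)=0$ together with the inverse rule, and your explicit words $\ell_{2i-1}=a_{g-i}^{-1}$, $\ell_{2i}=a_{g-i}b_{g-i+1}^{-1}a_{g-i}^{-1}b_{g-i}$, $\ell_0=b_g$ do reproduce the displayed formulas after substituting $[a_i]=-L_{2(g-i)-1}$ and $[b_i]=\widetilde L_{g-i+1}$. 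On the one genuinely delicate point --- that the product relations among the $\ell_k$ hold only in $\pi_1(C_n,P_0)$ while $\mathrm{std}$ lives on the free group, so the leftover boundary loops must be assigned lifts and the boundary word's contribution $\mu$ tracked --- you are in fact more careful than the paper, which passes over it in silence.
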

Write $\tau_1^{\mathrm{std}}$ and $\widetilde{L}^{\prime}_i$ for $\tau_1^{\mathrm{std}}(\varphi)$ and $|\varphi|\widetilde{L}_i$ respectively.
By induction, Equation \ref{tau_1} easily gives
\begin{prop}
For $i=1,2,\ldots,g-1$ and both $n=2g+2$ and $2g+1$,
\begin{center}
$\displaystyle \tau_1^{\mathrm{std}}[\ell_{2i+1}]
=L_{2i+1}^2-L_{2i}\widetilde{L}^{\prime}_i+\widetilde{L}^{\prime}_i(L_{2i}+L_{2i+1})$ and\\
$\displaystyle \tau_1^{\mathrm{std}}[\ell_{2i}]
=L_{2i-1}\widetilde{L}_i-\widetilde{L}_i(L_{2i-1}+L_{2i})-L_{2i}^2$.
\end{center}
Moreover, $\tau_1^{\mathrm{std}}[\ell_{1}]=L_1^2$.

For only $n=2g+2$, 
\begin{center}
$\displaystyle \tau_1^{\mathrm{std}}[\ell_{0}]=-\sum_{1\leq i<j\leq g}L_{2j}L_{2i}$,\quad $\displaystyle \tau_1^{\mathrm{std}}[\ell_{2g}]=L_{2g+1}L_{2g}-L_{2g}L_{2g+1}+L_{2g+1}^2-L_{2g}^2$, and
$\displaystyle \tau_1^{\mathrm{std}}[\ell_{2g+1}]=\sum_{1\leq i<j\leq g}L_{2j-1}L_{2i-1}+L_{2g}L_{2g+1}-L_{2g+1}L_{2g}-L_{2g+1}^2$.
\end{center}
For only $n=2g+1$, 
\begin{center}
$\displaystyle \tau_1^{\mathrm{std}}[\ell_{0}]=-\sum_{1\leq i<j\leq 2g-1}L_{j+1}L_{i+1}+\sum_{i=1}^{g-1}\{L_{2i}\widetilde{L}^{\prime}_i-\widetilde{L}^{\prime}_i(L_{2i}+L_{2i+1})\}+\sum_{i=0}^{g-1}L_{2i+1}^2$, and
$\displaystyle \tau_1^{\mathrm{std}}[\ell_{2g}]=\sum_{1\leq i<j\leq 2g-1}L_{j}L_{i}-\sum_{i=1}^{g-1}\{L_{2i-1}\widetilde{L}_i-\widetilde{L}_i(L_{2i-1}+L_{2i})\}+\sum_{i=0}^{g-1}L_{2i}^2-L_{2g}^2$.
\end{center}
\end{prop}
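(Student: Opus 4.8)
The plan is to read everything off from Equation~\ref{tau_1}, specialized to $\theta=\mathrm{std}$, the standard Magnus expansion attached to the fixed symplectic generating system, and to $\varphi$, the order-$n$ biholomorphism of $C_n$, feeding in the values of $\mathrm{std}_2(\ell_j)$ supplied by the preceding Lemma. Concretely, Equation~\ref{tau_1} becomes
\[
\tau_1^{\mathrm{std}}(\varphi)[\ell_k]=\mathrm{std}_2(\ell_k)-|\varphi|\,\mathrm{std}_2\bigl(\varphi^{-1}(\ell_k)\bigr),
\]
and two elementary facts about $\varphi$ turn this into a computation. First, $\varphi(\ell_k)=\ell_{k+1}$ in $\pi_1(C_n,P_0)$, so $\varphi^{-1}(\ell_k)=\ell_{k-1}$ with subscripts read modulo $n$; in particular $\varphi^{-1}(\ell_0)=\ell_{n-1}$. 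Second, the induced automorphism $|\varphi|$ of $H$ sends $L_k\mapsto L_{k+1}$ (again modulo $n$), hence carries $\widetilde{L}_i=L_0+L_2+\cdots+L_{2i-2}$ to $\widetilde{L}^{\prime}_i=L_1+L_3+\cdots+L_{2i-1}$ and acts diagonally on $H^{\otimes 2}$.

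With these in hand the generic cases are one-line substitutions. For $\ell_{2i+1}$ one plugs $\mathrm{std}_2(\ell_{2i+1})=L_{2i+1}^2$ into the first term and $\mathrm{std}_2(\ell_{2i})=L_{2i-1}\widetilde{L}_i-\widetilde{L}_i(L_{2i-1}+L_{2i})$ into the shifted term, applies $|\varphi|$ to the latter, and collects; for $\ell_{2i}$ the shifted term is $|\varphi|\,\mathrm{std}_2(\ell_{2i-1})=L_{2i}^2$; and for $\ell_1$ the shifted term is $\mathrm{std}_2(\ell_0)=0$, leaving $L_1^2$. The boundary values are obtained the same way, now invoking the ``special'' entries of the Lemma: $\tau_1^{\mathrm{std}}(\varphi)[\ell_0]=-|\varphi|\,\mathrm{std}_2(\ell_{n-1})$, where $\mathrm{std}_2(\ell_{n-1})$ is the listed value of $\mathrm{std}_2(\ell_{2g+1})$ when $n=2g+2$ and of $\mathrm{std}_2(\ell_{2g})$ when $n=2g+1$, and applying $|\varphi|$ simply shifts every index in those sums by one; similarly $\tau_1^{\mathrm{std}}(\varphi)[\ell_{2g}]$ and, for $n=2g+2$, $\tau_1^{\mathrm{std}}(\varphi)[\ell_{2g+1}]$ combine a special value with the even or odd formula in the shifted slot.

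A few indices, notably $\ell_{2g-1}$ for $n=2g+1$, are not listed on their own in the Lemma, so those instances need a small extra input; here I would recover the missing $\mathrm{std}_2(\ell_{2g-1})$ from the surface relations $\ell_0\ell_1\cdots\ell_{2g}=1$ (or $\prod_{k=0}^{g}\ell_{2k}\prod_{k=1}^{g}\ell_{2k-1}=1$) via the product formula~\ref{theta_2} for $\mathrm{std}_2$, or equivalently by carrying out the whole computation by induction on $k$, feeding $\mathrm{std}_2(\ell_{k-1})=\tau_1^{\mathrm{std}}(\varphi)[\ell_{k-1}]+|\varphi|\,\mathrm{std}_2(\ell_{k-2})$ back into the next application of Equation~\ref{tau_1}; this is presumably the induction the statement alludes to. The only genuine difficulty is the bookkeeping: tracking how $|\varphi|$ reindexes the triangular partial sums $\widetilde{L}_i$ and the double sums occurring in the special cases, and keeping the two parities $n=2g+2$ and $n=2g+1$, together with the mod-$n$ conventions on subscripts, carefully separated. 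There is no conceptual obstacle beyond Equation~\ref{tau_1} and the preceding Lemma.
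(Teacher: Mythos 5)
Your proposal is correct and is essentially the paper's own argument: the paper's entire proof is the one-line remark that, by induction, Equation \ref{tau_1} easily gives the formulas, and what you write out --- $\tau_1^{\mathrm{std}}(\varphi)[\ell_k]=\mathrm{std}_2(\ell_k)-|\varphi|\,\mathrm{std}_2(\ell_{k-1})$ together with $\varphi^{-1}(\ell_k)=\ell_{k-1}$, $|\varphi|L_k=L_{k+1}$, $|\varphi|\widetilde{L}_i=\widetilde{L}^{\prime}_i$, and substitution of the values from the preceding Lemma --- is exactly that computation made explicit. Your further observation that $\mathrm{std}_2(\ell_{2g-1})$ for $n=2g+1$ is not listed in the Lemma and must be recovered from the surface relation via Equation \ref{theta_2} correctly identifies and fills the one step the paper leaves implicit.
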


For the pointed hyperelliptic curve $(C_n,P_0)$,
we obtain explicit values $I_{(C_n,P_0)}$ in Section \ref{Pointed harmonic volume for $C_n$} and $\tau_1^{\mathrm{std}}$ in this subsection.
For other pointed compact Riemann surfaces, both values are unknown.

Using the identity \ref{natural identity} and $\tau_1^{\mathrm{std}}\in \Hom(H, H^{\otimes 2})$, we obtain explicit $\tau_1^{\mathrm{std}}\in \Hom(H^{\otimes 3},\Z)$ for the pointed hyperelliptic curve $(C_n,P_0)$ for $(g,n)=(2,5)$ and $(2,6)$.
\begin{ex}
Write the subsets of $S:=\{(i,j,k)\in \Z^3; 0\leq i,j,k\leq 3\}$:
\begin{align*}
 S_{-1}^{\textrm{even}}&:=\{(0, 1, 2), (0, 2, 1), (0, 3, 3), (1, 0, 3), (1, 2, 2), (2, 0, 1),\\
&\hspace{15em} (2, 1, 3), (3, 1, 0), (3, 2, 3), (3, 3, 2)\},\\
 S_{1}^{\textrm{even}}&:=\{(0, 0, 1), (0, 1, 3), (0, 2, 3), (1, 0, 2), (1, 2, 3), (2, 1, 2),\\
&\hspace{15em} (2, 2, 1), (2, 3, 3), (3, 0, 3), (3, 1, 2), (3, 3, 0)\},\\
 S_{-1}^{\textrm{odd}}&:=S_{-1}^{\textrm{even}}\cup \{(0, 2, 0), (1, 2, 0), (2, 0, 0), (3, 0, 0)\},\\
 S_{1}^{\textrm{odd}}&:=S_{1}^{\textrm{even}}\cup \{(0, 0, 0), (1, 0, 0)\},\\
 S_{3}^{\textrm{odd}}&:=\{(2, 2, 0)\}.
\end{align*}
For $(g,n)=(2,6)$, we have
\[\tau_1^{\mathrm{std}}(\ell_i\otimes \ell_j\otimes \ell_k)
=\left\{
 \begin{array}{cll}
  m, & (i,j,k)\in  S_{m}^{\textrm{even}} & \textrm{for } m=-1,1,\\
  0, & \textrm{otherwise},\\
 \end{array}
\right.\]
and for $(g,n)=(2,5)$,
\[\tau_1^{\mathrm{std}}(\ell_i\otimes \ell_j\otimes \ell_k)
=\left\{
 \begin{array}{cll}
  m, & (i,j,k)\in  S_{m}^{\textrm{odd}} & \textrm{for } m=-1,1,3,\\
  0, & \textrm{otherwise}.
 \end{array}
\right.\]

We similarly compute the value $\tau_1^{\mathrm{std}}(\ell_i\otimes \ell_j\otimes \ell_k)$ for other lower $(g,n)$.
But the general case is so complicated.

\end{ex}

\end{document}